\theoremstyle{theorem}
\newtheorem{thm}{Theorem}[section]
\newaliascnt{corollary}{thm}
\newtheorem{cor}[corollary]{Corollary}
\newaliascnt{lemma}{thm}
\newtheorem{lem}[lemma]{Lemma}
\newaliascnt{sublemma}{thm}
\theoremstyle{definition}
\newaliascnt{definition}{thm}
\newtheorem{defi}[definition]{Definition}
\newaliascnt{example}{thm}
\newtheorem{exa}[example]{Example}
\newaliascnt{remark}{thm}
\newtheorem{rmk}[remark]{Remark}
\newcommand{\supp}{\mathrm{ supp }}
\newcommand{\id}{\mathrm{id}}
\numberwithin{equation}{section}
\begin{document}

\title{A note on Machado--Bishop theorem in weighted spaces with applications}

\author{Deliang Chen}
\address{School of Mathematical Sciences, Shanghai Jiao Tong University, Shanghai 200240, People's Republic of China}
\email{chernde@sjtu.edu.cn}

\subjclass[2010]{Primary 41A65; Secondary 46E40}

\keywords{weighted space, approximation, Stone--Weierstrass theorem}

\begin{abstract}
A unified version of Machado--Bishop theorem in weighted spaces is given. A number of applications illustrate its importance.
\end{abstract}

\maketitle

\titlecontents{section}[0pt]{\vspace{0\baselineskip}\bfseries}
{\thecontentslabel\quad}{}
{\hspace{0em}\titlerule*[10pt]{$\cdot$}\contentspage}

\titlecontents{subsection}[1em]{\vspace{0\baselineskip}}
{\thecontentslabel\quad}{}
{\hspace{0em}\titlerule*[10pt]{$\cdot$}\contentspage}

\titlecontents{subsubsection}[2em]{\vspace{0\baselineskip}}
{\thecontentslabel\quad}{}
{\hspace{0em}\titlerule*[10pt]{$\cdot$}\contentspage}

\setcounter{tocdepth}{2}

\section{Introduction}

The Stone--Weierstrass theorem is one of the celebrated results in modern abstract analysis. A number of extensions were made by many authors. Among them, E. Bishop \cite{Bis61} gave an important observation that in order to verify $ g \in A $ where $ A $ is a closed unital subalgebra of $ C(\Omega) $ and $ \Omega $ is compact, one only considers $ g|_{E} \in A|_{E} $ for some simple subsets $ E \subset \Omega $ named the \emph{maximal $ A $-antisymmetric sets}; the original condition in the Stone--Weierstrass theorem in fact implies such $ E $ are single points. Later, S. Machado \cite{Mac77} formulated a quantitative version (i.e. a \emph{strong} version) of this theorem even in the vector-valued case. Other versions of this theorem were investigated by e.g. I. Glicksberg \cite{Gli63} (Bishop theorem for the strict topology), R. I. Jewett \cite{Jew63}, J. B. Prolla \cite{Pro88, Pro94}, and 
G. Paltineanu and I. Bucur \cite{PB17} (for $ C(\Omega, [0,1]) $ and $ C(\Omega, \mathbb{R}_+) $), and J. B. Prolla and S. Machado \cite{PM82} and M. S. Kashimoto \cite{Kas14} (for set-valued mappings), etc.

In his seminal article \cite{Nac65}, L. Nachbin introduced the weighted space to deal with the uniform approximation over a non-compact space. The author in that paper gave a description of the closure of some modules in weighted spaces based on the Stone--Weierstrass theorem, i.e., the so called weighted approximation problem; a more precise statement is to give some sufficient conditions for ``\emph{localization}'' (see e.g. \autoref{cor:localization}).
Generalizations of the Stone--Weierstrass theorem in weighted spaces in some sense have its advantage. For instance, by choosing different weighted families (i.e. Nachbin families), the topologies in weighted spaces include the point-open topology, the compact-open topology, the strict topology and the uniform topology (see \autoref{exa:spaces}); the weighted approximation problem in the so-called bounded case is related with the Stone--Weierstrass theorem which again turns out to be an important link with the Bernstein problem in the vector-valued case (see e.g. \autoref{exa:ber}); for more motivations, see \cite{Nac65}. J. B. Prolla \cite{Pro71} and W. H. Summers \cite{Sum71} gave a Bishop's version of the Stone--Weierstrass theorem in weighted spaces for locally compact spaces and for completely regular spaces, respectively; see also \cite{PM73} for the associated results. We refer the readers to see the more generalized results and important applications on this subject in the monograph \cite{Pro77}.

Meanwhile, lots of elementary proofs of the Stone--Weierstrass type of theorems were found; the above references we listed certainly not only gave generalized results but also provided simplified proofs. Here, the works of B. Brosowski and F. Deutsch (see \cite{BD81}) and T. J. Ransford (see \cite{Ran84}) need to be mentioned; their ingenious argument with minor changes in fact implies a more general version of Machado theorem which we give in this paper (see \autoref{thm:A}).

The present paper is to provide such a Machado theorem in the weighted spaces which recovers many known results we mentioned before; the statement is given in \autoref{sec:main}. Some new aspects of this theorem are listed in \autoref{rmk:new}. A number of applications in a more general setting based on this theorem are given in \autoref{sec:application} to illustrate the importance of this theorem.
\begin{enumerate}[(a)]
	\item A distance formula (see \autoref{thm:dis}) due to Burlando \cite{Bur05} is obtained.

	\item A version of the Stone--Weierstrass theorem (see \autoref{thm:nonseperate}) due to Timofte \cite[Theorem 7]{Tim05} is given; as corollaries, a general localization theorem (see \autoref{cor:localization}) due to Nachbin \cite[Theorem 2]{Nac65}, Prolla \cite{Pro71} and Summers \cite{Sum71}, and particularly a localization theorem for $ C(\Omega, [0,1]) $ (see \autoref{cor:C01}) due to von Neumann (see also \cite{Jew63}) are obtained. Also, a more classical form of the Stone--Weierstrass theorem (see \autoref{thm:SW}) is given for ease of application (see \autoref{sub:interpolation}).

	\item Some special forms of polynomial algebras are discussed in \autoref{sub:polynomial}; in particular, a version of the Bernstein problem in the vector-valued case (see \autoref{exa:ber}) is obtained.

	\item In \autoref{sub:interpolation}, we also give a brief discussion about the approximation problem with interpolation; a special case was studied in \cite[Section 4]{Pro94} and \cite{PK02}. Here we talk about a situation with more constraints in the sense of \cite{Tim05} (i.e. a good control on approximation's range); see \autoref{thm:ai0} and \autoref{cor:ai1} which generalize \cite[Theorem 16]{Tim05}. Finally, we pay attention on a concrete problem, i.e., to determine the closure of an operator's domain, which often occurs in the operator semigroup theory.
\end{enumerate}

\section{A generalized Machado--Bishop theorem}\label{sec:main}

Throughout this paper, the Hausdorff property of a topological space is not assumed unless where mentioned. We use the following notations:

$ \Omega $: a topological space (which might be not Hausdorff);

$ X $: a locally convex topological vector space with a family of seminorms denoted by $ \mathcal{A} $;

$ C(\Omega, X) $: the topological space consisting of all continuous functions of $ \Omega \to X $;

$ C_0(\Omega, X) $: the topological space consisting of all $ f \in C(\Omega, X) $ such that for all $ p \in \mathcal{A} $ and $ \epsilon > 0 $, $ \{ x \in \Omega: p(f(x)) \geq \epsilon \} $ is a compact subset of $ \Omega $;

$ \supp f $: the support of the map $ f: \Omega \to X $ defined by $ \supp f = \overline{f^{-1}(X \setminus \{0\})} $;

$ f|_S $: the restriction of the map $ f: \Omega \to X $ to $ S $ ($ \subset \Omega $);

$ V $: a Nachbin family, i.e., a set of upper semicontinuous functions of $ \Omega \to \mathbb{R}_+ $, whose elements are called \emph{weights}; see e.g. \cite{Nac65,Pro71, PM73, Pro77}.

Let $ W $ be a subset of $ C(\Omega, X) $. $ CV_0 W $ denotes the \emph{weighted space} of $ W $ (with respect to the Nachbin family $ V $), that is, $ f \in CV_0 W $ if and only if $ f \in W $ and for all $ v \in V $, $ p \in \mathcal{A} $, and $ \epsilon > 0 $, $ \{ x \in \Omega: v(x) p(f(x)) \geq \epsilon \} $ is a compact subset of $ \Omega $. For $ f \in CV_0 W $, $ F \subset \Omega $, define
\[
|f|_{v, p, F} = \sup \{ v(x)p(f(x)): x \in F \}, ~|f|_{v, p} = |f|_{v, p, \Omega},
\]
and for $ B \subset CV_0 W $,
\[
d_{v, p, F} (f, B) = \inf \{ |f - g|_{v, p, F}: g \in B \}.
\]
For brevity, if $ V, \mathcal{A} $ are the single point sets, then we write $ d_{F} (f, B) $ (or $ d_{v, F} (f, B), d_{p, F} (f, B) $) instead of $ d_{v, p, F} (f, B) $; if $ W = C(\Omega, X) $, then we write $ CV_0(\Omega, X) $ instead of $ CV_0C(\Omega, X) $.

Consider the following two additional conditions about $ V $:
\begin{enumerate}[(a)]
	\item for all $ v_1, v_2 \in V $, there are $ \lambda \geq 0 $ and $ w \in V $ such that $ v_1 \leq \lambda w $, $ v_2 \leq \lambda w $ (point-wise);
	\item for each $ x \in \Omega $, there is $ v \in V $ such that $ v(x) > 0 $.
\end{enumerate}
If $ CV_0 W $ is a linear subspace of $ C(\Omega, X) $, then the topological vector space $ CV_0 W $ is locally convex if condition (a) is satisfied and Hausdorff if condition (b) holds, where a local base at $ 0 $ can be given by $ \{ f \in CV_0 W : |f|_{v, p} < \epsilon \} $, $ v \in V $, $ p \in \mathcal{A} $, and $ \epsilon > 0 $.

Some examples of weighted spaces are the following. Let $ \chi_{K} $ be the characteristic function of $ K $, i.e., $ \chi_{K} (x) = 1 $ if $ x \in K $, and $ \chi_{K} (x) = 0 $ otherwise.
\begin{exa}\label{exa:spaces}
	\begin{enumerate}[(a)]
		\item Set $ V = \{ \chi_{K}: K \text{ is a finite set of } \Omega \} $, then $ CV_0 (\Omega, X) = C(\Omega, X) $, and the topology in this case is the point-open topology of $ C(\Omega, X) $.

		\item \label{exa:co} Set $ V = \{ \chi_{K}: K \text{ is a compact set of } \Omega \} $, then $ CV_0 (\Omega, X) = C(\Omega, X) $, and the topology in this case is the compact-open topology of $ C(\Omega, X) $.

		\item \label{exa:uniform} Set $ V = \{ 1 \} $, then $ CV_0 (\Omega, X) = C_0(\Omega, X) $. In this case, the topology is the uniform topology, i.e., $ f_{\alpha} \to 0 $ if and only if for all $ p \in \mathcal{A} $, $ p(f_{\alpha}(x)) \to 0 $ uniformly for $ x \in \Omega $.

		\item \label{exa:strict} $ V = C_0(\Omega, \mathbb{R}_+) $ and $ W = C_b(\Omega, X) $ (i.e. all the bounded continuous functions of $ \Omega \to X $), then $ CV_0 W = C_b(\Omega, X) $ and its topology is the \emph{strict topology}; see e.g. \cite{Buc58, Gli63, Sum71}.
	\end{enumerate}
\end{exa}

A modified conception of multiplier is given in the following which is sufficient for us (see also \cite{FdLP84, PK02}).

\begin{defi}[multiplier]
	For $ W \subset C(\Omega, X) $ and $ \varphi \in C(\Omega, \mathbb{C}) $, we say $ \varphi $ is a \emph{multiplier} of $ W $ if $ \varphi f + (1 - \varphi) g \in W $ for all $ f, g \in W $. A subset $ A \subset C(\Omega, \mathbb{C}) $ is called a \emph{multiplier} of $ W $ if for every $ \varphi \in A $, $ \varphi $ is a multiplier of $ W $.
\end{defi}

For example, if $ W \pm W \subset W $ and $ A W \subset W $, then $ A $ is a multiplier of $ W $ (which is frequently used in this paper). Indeed, for $ \varphi \in A $ and $ g, h \in W $, we have $ \varphi g \in W $ and $ (1 - \varphi) h = h - \varphi h \in W $ and hence $ \varphi g + (1 - \varphi) h \in W $. An elementary fact about multiplier is the following.
\begin{lem}\label{lem:mult}
	If $ \varphi $ is a multiplier of $ W $, so are $ \varphi^n $, $ 1 - \varphi^n $, and particularly $ (1 - \varphi^n)^m $ where $ n, m \in \mathbb{N}_+ $.
\end{lem}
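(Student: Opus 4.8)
The plan is to reduce the statement to two elementary closure properties of the class of multipliers of $W$ and then combine them. Specifically, I would first isolate the following two claims: (i) if $\psi \in C(\Omega, \mathbb{C})$ is a multiplier of $W$, then so is $1 - \psi$; and (ii) if $\psi$ is a multiplier of $W$, then $\psi^k$ is a multiplier of $W$ for every $k \in \mathbb{N}_+$. Granting these, the three assertions follow at once: $\varphi^n$ is a multiplier by (ii) applied to $\psi = \varphi$; then $1 - \varphi^n$ is a multiplier by (i) applied to $\psi = \varphi^n$; and finally $(1 - \varphi^n)^m$ is a multiplier by (ii) applied to $\psi = 1 - \varphi^n$. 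Throughout, note that each candidate function lies in $C(\Omega, \mathbb{C})$, since sums, differences, and products of continuous scalar functions are continuous, so it makes sense to ask whether it is a multiplier.

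Claim (i) should be immediate from the symmetry built into the definition. For $f, g \in W$ one has
\[
(1 - \psi) f + \big(1 - (1 - \psi)\big) g = \psi g + (1 - \psi) f,
\]
which is exactly the element obtained from the multiplier condition for $\psi$ with the roles of $f$ and $g$ interchanged; since $\psi$ is a multiplier this lies in $W$, proving (i).

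For claim (ii) I would argue by induction on $k$, the case $k = 1$ being the hypothesis. The crux is to choose the right auxiliary function. Given $f, g \in W$, set $h = \psi f + (1 - \psi) g$, which lies in $W$ because $\psi$ is a multiplier. Assuming $\psi^k$ is a multiplier, the element $\psi^k h + (1 - \psi^k) g$ lies in $W$, and a direct expansion gives
\[
\psi^k h + (1 - \psi^k) g = \psi^{k} \big( \psi f + (1 - \psi) g \big) + (1 - \psi^k) g = \psi^{k+1} f + (1 - \psi^{k+1}) g,
\]
the relevant terms telescoping since $\psi^k(1 - \psi) + (1 - \psi^k) = 1 - \psi^{k+1}$. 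Hence $\psi^{k+1} f + (1 - \psi^{k+1}) g \in W$, completing the induction.

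I do not expect a genuine obstacle here, as the argument is purely algebraic; the only point requiring a moment's thought is spotting the intermediate function $h = \psi f + (1 - \psi) g$, which converts the $(k+1)$-th power condition into an application of the $k$-th power condition followed by one more use of the base hypothesis.
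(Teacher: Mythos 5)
Your proposal is correct and follows essentially the same route as the paper: your inductive step, built on the intermediate function $h = \psi f + (1-\psi)g$ and the identity $\psi^k h + (1-\psi^k)g = \psi^{k+1}f + (1-\psi^{k+1})g$, is exactly the computation the paper carries out for $n=2$ (namely $\varphi^2 f + (1-\varphi^2)g = \varphi(\varphi f + (1-\varphi)g) + (1-\varphi)g$). The only difference is one of completeness: the paper dismisses the general case as easy after this single computation, whereas you spell out the full induction and make explicit the symmetry argument showing $1-\psi$ is a multiplier, both of which the paper leaves implicit.
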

\begin{proof}
	This is easy. For example, take $ n = 2 $. For all $ f, g \in W $, since $ \varphi f + (1 - \varphi) g \in W $, we have
	\[
	\varphi^2 f + (1 - \varphi^2) g = \varphi (\varphi f + (1 - \varphi) g) + (1 - \varphi) g \in W.
	\]
	The proof is complete.
\end{proof}

The following condition was used in \cite{Nac65, Pro71, Sum71}.
\begin{defi}[bounded condition]\label{def:bounded}
	We say a subset $ A $ of $ C(\Omega, \mathbb{C}) $ satisfies the \emph{bounded condition} (with respect to $ V $), if every $ \varphi \in A $ is bounded on the support of every $ v \in V $.
\end{defi}
For example, if $ A \subset C_{b}(\Omega, \mathbb{C}) $ or all the weights $ v \in V $ have compact supports, then $ A $ satisfies the bounded condition with respect to $ V $.

The following definition of antisymmetric set seems new.

\begin{defi}[antisymmetric set]\label{def:anti}
	\begin{enumerate}[(a)]
		\item For $ A \subset C(\Omega, \mathbb{C}) $, a subset $ S \subset \Omega $ is called an \emph{$ A $-antisymmetric set} if $ f \in A $ such that $ f: S \to [0,1] $, then $ f|_{S} $ is a constant.
		\item More generally, for $ v \in V $, a subset $ S \subset \supp v $ is called an \emph{$ (A, v) $-antisymmetric set} if $ f \in A $ such that $ f: S \to [0,1] $, then $ f|_{S} $ is a constant.
	\end{enumerate}
\end{defi}

\begin{rmk}\label{rmk:anti}
	\begin{enumerate}[(a)]
		\item If $ A $ is a subalgebra of $ C_b(\Omega, \mathbb{C}) $, then the definition of $ A $-antisymmetric set is the same as the classical one, i.e., $ S \subset \Omega $ is an $ A $-antisymmetric set if $ f \in A $ and $ f|_{S} $ is real then $ f|_{S} $ is constant (see e.g. \cite{Bis61}). We show this as follows.
		\begin{enumerate}[$ \bullet $]
			\item If $ S $ is an $ A $-antisymmetric set in the sense of \autoref{def:anti} (a) and if $ f \in A $ such that $ f|_{S} $ is real, then we can assume $ |f| < n $, and thus $ \frac{f^2}{n^2}: S \to [0,1] $, which yields $ f^2 $ is a constant on $ S $ (as $ \frac{f^2}{n^2} \in A $). We show $ f $ is constant on $ S $. If $ f^{2}|_{S} = 0 $, then it is true. So, without loss of generality, suppose $ f^{2}|_{S} = 1 $. Then $ \frac{f + f^{2}}{2} \in A $ (as $ A $ is a subalgebra) and $ \frac{f + f^{2}}{2}: S \to [0,1] $, yielding $ \frac{f + 1}{2}|_{S} = \frac{f + f^{2}}{2}|_{S} $ is a constant and consequently so is $ f|_{S} $.
		\end{enumerate}
	
		Note that in the classical definition of antisymmetric set, one usually considers $ C(\Omega, \mathbb{C}) $ if $ \Omega $ is compact or $ C_0(\Omega, \mathbb{C}) $ if $ \Omega $ is locally compact (see e.g. \cite{Bis61, Gli63, Mac77}).
		\item Introducing the notion of $ (A, v) $-antisymmetric set is to recover the results obtained in \cite{Nac65, Pro71, Sum71}. Assume $ A $ satisfies the \emph{bounded condition} with respect to $ V $, which was used in those papers cited before. Now let $ A $ be a subalgebra of $ C(\Omega, \mathbb{C}) $. The definition of $ A $-antisymmetric set $ S $ given in \cite{Pro71} is that if $ f \in A $ such that $ f|_S $ is real then $ f|_S $ is a constant. As the results given in \cite{Pro71} were all under the assumption that $ A $ satisfies above bounded condition, if $ S $ is an $ (A, v) $-antisymmetric set in the sense of \autoref{def:anti}, then for $ f \in A $ such that $ f|_S $ is real, the same argument given in (a) shows that $ f|_S $ is a constant due to the boundedness of $ f|_{\supp v} $ and $ S \subset \supp v $.
		\item Since in many cases, $ A \subset C_{b}(\Omega, \mathbb{C}) $ (or even $ A \subset C(\Omega, [0,1]) $) or all the weights $ v \in V $ have compact supports, the definition of \emph{strong $ A $-antisymmetric set} $ S $ like that $ A_S = \{ f|_{S}: f\in A \} $ contains no nonconstant real functions is sufficient.
	\end{enumerate}
\end{rmk}

Under the above discussion, we have
\[
\text{strong $ A $-antisymmetric set}~ S \Rightarrow ~\text{$ A $-antisymmetric set}~ S \Rightarrow ~\text{$ (A,v) $-antisymmetric set}~ S \cap \supp v;
\]
and if $ A $ is a subalgebra of $ C_b(\Omega, \mathbb{C}) $ or $ A  \subset C(\Omega, [0,1]) $, then
\[
\text{strong $ A $-antisymmetric set}~ S \Leftrightarrow ~\text{$ A $-antisymmetric set}~ S;
\]
and if $ A $ is a subalgebra of $ C(\Omega, \mathbb{C}) $ satisfying the bounded condition with respect to $ V $, then for $ v \in V $ and $ S \subset \supp v $,
\[
\text{$ (A,v) $-antisymmetric set}~ S \Rightarrow ~\text{strong $ A $-antisymmetric set}~ S.
\]

For $ W \subset C(\Omega, X) $, write
\[
[x] \triangleq [x]_{W} = \{ y \in \Omega: f(y) = f(x), \forall f \in W \}.
\]
$ \{[x]: x \in \Omega \} $ defines an equivalence relation in $ \Omega $ (with respect to $ W $), denoted by $ \rho_{W} $; the equivalence classes are $ [x]_{W} $, $ x \in \Omega $. For brevity, for $ S \subset \Omega $, the notation $ S \subset \rho_{W} $ means $ f(x) = f(y) $ for every $ x, y \in S $ and every $ f \in W $. We also write $ \rho_{W} \subset \rho_{W_1} $ if $ S \subset \rho_{W} \Rightarrow S \subset \rho_{W_1} $ (see also \cite{Tim05}). For $ S \subset \Omega $, $ t \in \Omega $, let
\[
W_{S} = \{ f|_{S}: f\in W \} \subset C(S, X), ~ W(t) = \{ f(t): f \in W \}.
\]
We say $ W $ \emph{separates} $ S $ if for every $ x, y \in S $ such that $ x \neq y $ there is $ f \in W $ such that $ f(x) \neq f(y) $.

\begin{exa}\label{exa:anti}
	Let $ A \subset C(\Omega, \mathbb{C}) $.
	\begin{enumerate}[(a)]
		\item If $ A \subset C(\Omega, [0,1]) $, then $ [x]_{A} $, $ x \in \Omega $ are all the maximal $ A $-antisymmetric sets. Note that if, in addition, $ A $ separates $ \Omega $, then $ [x]_{A} = \{x\} $.
		\item If for every $ t \in \Omega $, $ A(t) \cap [0,1] = \emptyset $, then every subset of $ \Omega $ is an $ A $-antisymmetric set.
		\item If $ A $ is a subalgebra of $ C_b(\Omega, \mathbb{C}) $ such that $ A $ is self-adjoint (i.e. if $ f \in A $ then $ \overline{f} \in A $), then $ [x]_{A} $, $ x \in \Omega $ are all the maximal $ A $-antisymmetric sets. (Note that $ f + \overline{f} $ and $ i f + \overline{ if } $ are real.)
		\item \label{symmetric} If $ A $ is a subalgebra of $ C_b(\Omega, \mathbb{C}) $ such that $ A $ is self-adjoint and separates $ \Omega \setminus S $ where $ S $ is a closed subset of $ \Omega $ such that $ A_S = 0 $, then the all maximal $ A $-antisymmetric sets are $ \{x\} $ ($ x \in \Omega \setminus S $) and $ S $, or $ \{x\} $ ($ x \in \Omega \setminus \{S \cup \{x_0\}\} $) and $ S \cup \{x_0\} $ for some $ x_0 \in \Omega \setminus S $.
		\item \label{ee} Let $ A $ denote all the polynomials in $ \mathbb{D} = \{ z \in \mathbb{C}: |z| \leq 1 \} $. Although $ A $ separates $ \mathbb{D} $ (and so $ [x]_{A} = \{x\} $), the maximal $ A $-antisymmetric set is only $ \mathbb{D} $.
	\end{enumerate}
\end{exa}

Under the above preliminaries, we are in a position to state our generalized Machado--Bishop theorem; the original version of the Machado theorem was in \cite{Mac77} which can be considered as a \emph{strong} version of the Stone--Weierstrass theorem.

\begin{thm}[Machado Theorem]\label{thm:A}
	Let $ A $ be a non-empty subset of $ C(\Omega, \mathbb{C}) $ and $ W $ a topological vector subspace of $ C(\Omega, X) $. Take a subset $ W_0 $ of $ CV_0 W $. Assume $ A $ is a multiplier of $ W_0 $. Then for every $ f \in CV_0 W $, $ v \in V $ and $ p \in \mathcal{A} $, there is an $ (A, v) $-antisymmetric set $ S $ such that
	\[
	d_{v, p, S}(f, W_0) = d_{v, p, \Omega}(f, W_0).
	\]
\end{thm}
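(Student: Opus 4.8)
The plan is to fix $f \in CV_0 W$, $v \in V$, $p \in \mathcal{A}$, set $d = d_{v,p,\Omega}(f,W_0)$, and locate $S$ among the closed subsets of $\supp v$ by a Zorn's lemma minimization. Since $v$ vanishes off $\supp v$ we have $d_{v,p,\supp v}(f,W_0)=d$, so the family
\[
\mathcal{E} = \{\, E \subset \supp v : E \text{ closed and } d_{v,p,E}(f,W_0) = d \,\}
\]
is nonempty; I order it by reverse inclusion. If $d = 0$ the conclusion is immediate (any singleton in $\supp v$, or $\emptyset$ when $v\equiv 0$, is $(A,v)$-antisymmetric and realizes $d$), so I assume $d>0$, in which case every member of $\mathcal{E}$ is nonempty. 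Applying Zorn's lemma produces a minimal element $S$, and the entire content of the theorem is then that \emph{such a minimal $S$ is automatically $(A,v)$-antisymmetric}.

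First I would verify the hypothesis of Zorn's lemma: every chain $\{E_\alpha\}$ in $\mathcal{E}$ has the intersection $E=\bigcap_\alpha E_\alpha$ as a lower bound, i.e.\ $E\in\mathcal{E}$. The inequality $d_{v,p,E}\le d$ is automatic. For the reverse, suppose some $g\in W_0$ gave $|f-g|_{v,p,E}<d'<d$. Here the defining property of $CV_0 W$ enters: since $f-g\in CV_0 W$, the set $K=\{x:v(x)p(f(x)-g(x))\ge d'\}$ is compact and disjoint from $E$. The compact sets $K\cap E_\alpha$ then have empty total intersection, so by compactness some finite subfamily already has empty intersection, and since the family is a chain a single $K\cap E_{\alpha_0}$ is empty; hence $|f-g|_{v,p,E_{\alpha_0}}\le d'<d$, contradicting $E_{\alpha_0}\in\mathcal{E}$. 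These compactness arguments are valid without assuming $\Omega$ Hausdorff, using only that a closed subset of a compact set is compact together with the finite-intersection characterization of compactness.

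The heart of the proof---and the step I expect to be the main obstacle---is showing the minimal $S$ is $(A,v)$-antisymmetric. Arguing by contradiction, suppose there is $\varphi\in A$ with $\varphi(S)\subset[0,1]$ and $\varphi|_S$ non-constant; put $a=\inf_S\varphi<b=\sup_S\varphi$ and fix thresholds $a<c_1<c_2<b$, so automatically $0<c_1<c_2<1$. The sets $S_0=\{x\in S:\varphi(x)\le c_2\}$ and $S_1=\{x\in S:\varphi(x)\ge c_1\}$ are proper closed subsets of $S$ (by the definitions of $a,b$), so by minimality there are $g_0,g_1\in W_0$ with $|f-g_0|_{v,p,S_0}<d$ and $|f-g_1|_{v,p,S_1}<d$. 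I would glue them with the multiplier $\psi_n=(1-\varphi^{n})^{m_n}$, which is a multiplier of $W_0$ by \autoref{lem:mult}, so that
\[
g=\psi_n g_0+(1-\psi_n)g_1\in W_0,\qquad f-g=\psi_n(f-g_0)+(1-\psi_n)(f-g_1).
\]
The exponents are calibrated (for instance $m_n\approx (c_1c_2)^{-n/2}$) so that, as $n\to\infty$, $\psi_n\to 1$ uniformly on $\{\varphi\le c_1\}$ and $\psi_n\to 0$ uniformly on $\{\varphi\ge c_2\}$; a short Bernoulli/exponential estimate confirms this since $c_1<c_2$.

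To finish, write $u_i=v\,p(f-g_i)$; the $CV_0$ condition again guarantees each $u_i$ is bounded, say by $M<\infty$, since an upper semicontinuous function attains its maximum on the compact set $\{u_i\ge d\}$. On $S$ one has $\psi_n\in[0,1]$, hence $v\,p(f-g)\le \psi_n u_0+(1-\psi_n)u_1$, and I would estimate this on the three regions $\{\varphi\le c_1\}$, $\{c_1<\varphi<c_2\}$, $\{\varphi\ge c_2\}$. The decisive point is that the transition region $\{c_1<\varphi<c_2\}$ lies in $S_0\cap S_1$, so \emph{both} $u_0<d$ and $u_1<d$ there and the convex combination stays below $d$ whatever $\psi_n$ does; on the two outer regions one of $u_0,u_1$ is below $d$ while the complementary factor ($1-\psi_n$ or $\psi_n$) is small enough, thanks to $M<\infty$, to keep the total below $d$. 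This yields $|f-g|_{v,p,S}<d$, contradicting $S\in\mathcal{E}$. Hence $S$ is $(A,v)$-antisymmetric with $d_{v,p,S}(f,W_0)=d$, as required. The only genuinely delicate calibration is matching the decay rate of $\psi_n$ against the global bound $M$ on the outer regions, which is exactly what the overlap construction and the multiplier powers from \autoref{lem:mult} are designed to render routine.
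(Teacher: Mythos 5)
Your proof is correct and follows essentially the same route as the paper's: a maximality argument (Zorn/Hausdorff) on closed subsets of $\supp v$ ordered by reverse inclusion, with the chain condition secured by the compactness coming from the $CV_0$ condition, followed by the Brosowski--Deutsch--Ransford gluing of two local approximants via the multiplier powers $(1-\varphi^n)^{m_n}$ and a three-region estimate. Your minor variations --- two real thresholds with exponent roughly $(c_1c_2)^{-n/2}$ instead of the paper's three integer-reciprocal thresholds with exponent $c_2^n$, and making the uniform bound $M$ explicit where the paper leaves it implicit in ``$g_n \rightrightarrows g_Y$'' --- are cosmetic, not a different method.
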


\begin{proof}
	Here, the argument due to Brosowski--Deutsch \cite{BD81} and Ransford \cite{Ran84} can be applied. We give the details as follows. Let $ a = d_{v, p, \Omega}(f, W_0) $. If $ a = 0 $, then the proof is finished. So assume $ a > 0 $. Set
	\[
	\mathcal{F} \triangleq \{F \subset \supp v: F ~\text{is a closed subset of $ \supp v $ and}~ d_{v, p, F}(f, W_0) = a \}.
	\]
	Obviously, $\mathcal{F} \neq \emptyset$ as $\supp v \in \mathcal{F}$. Given a natural partial order $\supset$ in $\mathcal{F}$, now $(\mathcal{F}, \supset)$ is a partially ordered set. We claim if $\mathcal{E}$ is a totally ordered subset of $\mathcal{F}$, then $F_0 = \bigcap\{ F: F \in \mathcal{E} \} \in \mathcal{F}$. Indeed, for any $ 0 < \varepsilon < a / 2 $, if $g \in W_0$ and $F \in \mathcal{E}$, then
	\[
	\Theta_F \triangleq \{ x \in F: v(x)p(f(x) - g(x)) \geq a - \varepsilon \},
	\]
	is closed (due to $ v, p, f, g $ are upper semicontinuous) and compact (due to $ f - g \in CV_0 W $ and $ a - \varepsilon > 0 $); in addition, $ \Theta_F \neq \emptyset $ as $ d_{v, p, F}(f, W_0) = a $. Therefore, $\bigcap\limits_{F \in \mathcal{E}}\{ \Theta_F \}$ is non-empty and closed. That is for all $g \in W_0$,
	\[
	\{ x \in F_0: v(x)p(f(x) - g(x)) \geq a - \varepsilon \} \neq \emptyset,
	\]
	and so $d_{v, p, F_0}(f, W_0) \geq a - \varepsilon$. As $ \varepsilon $ is arbitrarily small, one gets $F_0 \in \mathcal{F}$. By the Hausdorff's maximality theorem (see e.g. \cite{Rud87}), $\mathcal{F}$ has a maximal element $S \in \mathcal{F}$; evidently $ S $ is closed. In the following we show $S$ is an $(A, v)$-antisymmetric set.

	Otherwise, there is $h \in A$ such that $h: S \to [0,1]$ but it is not a constant. Let
	\[
	b = \inf\limits_{t \in S}h(t), ~c = \sup\limits_{t \in S}h(t).
	\]
	Then $ b < c $. Take integrals $c_1, c_2, c_3$ such that $0 \leq b<1/c_1<1/c_2<1/c_3<c \leq 1$. Let
	\[
	Y \triangleq \{ t \in S : 0\leq h(t) \leq 1/c_3 \}, ~Z \triangleq \{ t \in S : 1/c_1 \leq h(t) \leq 1 \}.
	\]
	Then $Y, Z$ are closed and proper subsets of $S$. By the maximal property of $S$ in $ \mathcal{F} $, there are $g_Y, g_Z \in W_0$ such that
	\[
	|f-g_Y|_{v, p, Y} < a, ~|f-g_Z|_{v, p, Z} < a.
	\]
	Take
	\[
	h_n = (1-h^n)^{c_2^n}, ~ g_n = h_n\cdot g_Y + (1-h_n)\cdot g_Z, ~ n \in \mathbb{N}.
	\]
	As $ h $ is a multiplier of $ W_0 $, we have $ g_n \in W_0 $ by \autoref{lem:mult}. Also note that $ |f - g_n|_{v, p, Y \cap Z} < a $ for $ g_n $ is a convex combination $g_Y, g_Z$ and $ 0 \leq h_n|_{S} \leq 1 $.

	For $t \in Y \backslash Z$, we have $0\leq h(t) < 1/c_1$, and
	$$
	h_n(t) = (1-h^n(t))^{c_2^n} \geq 1 - c_2^nh^n(t) \geq 1 - (c_2/c_1)^n;
	$$
	and for $t \in Z \backslash Y$, we have $1/c_3 < h(t) \leq 1$, and
	\[
	h_n(t) = (1-h^n(t))^{c_2^n} \leq (1+h(t))^{-c_2^n} \leq (c^n_2h^n(t))^{-1} \leq (c_3/c_2)^n.
	\]
	(Here, the following simple inequalities are used: if $|x|\leq1, ~p \geq 1$, then $(1-x)^p \geq 1-px$ and $(1+x)^p \geq px$.) Hence, $g_n \rightrightarrows g_Y$ in $Y \backslash Z$, and $g_n \rightrightarrows g_Z$ in $Z \backslash Y$. Now, we get
	\[
	|f-g_n|_{v, p, S} = \max\{ |f-g_n|_{v, p, Y \backslash Z}, |f-g_n|_{v, p, Z \backslash Y}, |f-g_n|_{v, p, Z \cap Y} \} < a,
	\]
	for sufficiently large $n$. Consequently, $d_{v, p, S}(f, W_0) < a$, which is a contradiction as $S \in \mathcal{F}$. The proof is complete.
\end{proof}

Let us give a simple corollary of \autoref{thm:A} due to Bishop \cite{Bis61} and Prolla \cite{Pro71, Pro88}.

\begin{cor}\label{cor:A}
	Let $ W $ be a topological vector subspace of $ C(\Omega, X) $ and $ W_0 \subset CV_0W $. Take $ f \in CV_0W $.
	\begin{enumerate}[(a)]
		\item \label{cor:Aa} (See Bishop \cite{Bis61} and Prolla \cite{Pro71}) Suppose $ A $ is a subalgebra of $ C(\Omega, \mathbb{C}) $ and $ W_0 $ is a closed linear subspace of $ CV_0W $ and an $ A $-module (i.e. $ A W_0 \subset W_0 $). If for every $ v \in V $ and for every $ (A, v) $-antisymmetric set $ S \subset \supp v $, there is $ g \in W_0 $ such that $ f|_{S} = g|_{S} $, then $ f \in W_0 $.

		\item \label{cor:Ab} (See Prolla \cite{Pro88}) Let $ A \subset C(\Omega, [0,1]) $ such that it is a multiplier of $ W_0 $, then for every $ v \in V $ and $ p \in \mathcal{A} $, there is $ x \in \Omega $ such that
		\[
		d_{v, p, [x]_{A}}(f, W_0) = d_{v, p, \Omega}(f, W_0).
		\]
	\end{enumerate}
\end{cor}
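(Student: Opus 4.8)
The plan is to obtain both parts as immediate consequences of the Machado Theorem (\autoref{thm:A}), whose whole point is to produce, for each $v \in V$ and $p \in \mathcal{A}$, a single $(A,v)$-antisymmetric set $S \subset \supp v$ on which the global distance is already realized. The only work that remains is to check that the hypotheses of each part put us in a position to apply \autoref{thm:A}, and then to translate the equality $d_{v,p,S}(f,W_0) = d_{v,p,\Omega}(f,W_0)$ into the desired conclusion.

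For part (a), I would first verify the applicability of \autoref{thm:A}: since $W_0$ is a linear subspace, $W_0 \pm W_0 \subset W_0$, and by hypothesis $A W_0 \subset W_0$; by the elementary remark following the definition of multiplier, $A$ is then a multiplier of $W_0$. Now fix $v \in V$ and $p \in \mathcal{A}$ and set $a = d_{v,p,\Omega}(f,W_0)$. \autoref{thm:A} furnishes an $(A,v)$-antisymmetric set $S \subset \supp v$ with $d_{v,p,S}(f,W_0) = a$. The hypothesis of part (a) applied to this very $S$ yields $g \in W_0$ with $f|_S = g|_S$, so that $|f-g|_{v,p,S} = 0$ and hence $a = 0$. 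As $v$ and $p$ were arbitrary, $f$ lies in the closure of $W_0$ inside $CV_0 W$; since $W_0$ is assumed closed, $f \in W_0$. (Note that closedness of $W_0$ is used precisely at this final step.)

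For part (b), the key observation is that when $A \subset C(\Omega,[0,1])$ every $\varphi \in A$ automatically maps any set into $[0,1]$, so the defining requirement of an $(A,v)$-antisymmetric set $S$ degenerates to the statement that \emph{every} $\varphi \in A$ is constant on $S$; equivalently, $S \subset [x]_A$ for any $x \in S$ (this is \autoref{exa:anti} read at the level of $(A,v)$-antisymmetry). Fix $v,p$ and put $a = d_{v,p,\Omega}(f,W_0)$. If $a = 0$, then since $[x]_A \subset \Omega$ forces $d_{v,p,[x]_A}(f,W_0) \le d_{v,p,\Omega}(f,W_0) = 0$, any point $x \in \Omega$ works. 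If $a > 0$, \autoref{thm:A} (applicable because $A$ is a multiplier of $W_0$ by hypothesis) gives a necessarily nonempty $(A,v)$-antisymmetric set $S \subset \supp v$ with $d_{v,p,S}(f,W_0) = a$; picking any $x \in S$ we get $S \subset [x]_A \subset \Omega$, and the monotonicity of $d_{v,p,\cdot}(f,W_0)$ under set inclusion yields $a = d_{v,p,S}(f,W_0) \le d_{v,p,[x]_A}(f,W_0) \le d_{v,p,\Omega}(f,W_0) = a$, forcing the required equality.

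I do not expect any genuine obstacle here, since all the analytic difficulty — the Hausdorff-maximality construction of the antisymmetric set — has already been discharged in the proof of \autoref{thm:A}. The only points demanding a little care are the identification in part (b) of $(A,v)$-antisymmetric sets with subsets of the classes $[x]_A$, and the bookkeeping of the degenerate case $a=0$, where the set supplied by \autoref{thm:A} may be empty and one must instead fall back on the trivial bound $d_{v,p,[x]_A}(f,W_0) \le d_{v,p,\Omega}(f,W_0)$.
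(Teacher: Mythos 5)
Your proof is correct and takes essentially the same route as the paper: both parts are read directly off \autoref{thm:A}, with (a) resting on the observation that a subalgebra $A$ with $AW_0 \subset W_0$ and $W_0$ linear is a multiplier of $W_0$, and (b) resting on the fact that, for $A \subset C(\Omega,[0,1])$, every $(A,v)$-antisymmetric set is contained in some class $[x]_A$ (cf.\ \autoref{exa:anti}). The paper's proof consists of exactly these two remarks; your write-up simply makes explicit the routine details it leaves implicit (monotonicity of $d_{v,p,\cdot}(f,W_0)$ under inclusion and the degenerate case $d_{v,p,\Omega}(f,W_0)=0$).
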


\begin{proof}
	(a) Note that if $ A $ is a subalgebra and $ W_0 $ is an $ A $-module, then $ A $ is a multiplier of $ W_0 $.

	(b) Note that $ [x]_{A} $, $ x \in \Omega $ are all the maximal $ A $-antisymmetric sets.
\end{proof}

\begin{rmk}[What is new?]\label{rmk:new}
	\begin{enumerate}[(a)]
		\item To our knowledge, this version of the Machado theorem seems new in weighted spaces. We will use this theorem to reprove some results obtained in \cite{Bur05, Tim05} with some generalizations.

		\item This is a version that without assuming $ A $ is a subalgebra or $ W_0 $ is a linear space; what we need is that $ A $ is a multiplier of $ W_0 $ which was also noted in e.g. \cite{Pro88, Pro94}. In some cases, we will take $ A = C(\Omega, [0,1]) $ which is not a subalgebra; see e.g. \autoref{thm:dis} and \autoref{thm:ai0}. For another application of this situation about the characterization of convex cones in continuous spaces, see also \cite{Pro88, Pro94}.

		\item Even in the case that $ A $ is a subalgebra, in general, $ A $ does not need to contain constants; in some cases, this is also important for applications (see e.g. \autoref{thm:SW} and \autoref{sub:interpolation}). See also \autoref{cor:C01} in the same spirit. We learned this fact from \cite[p. 403]{Rud91}. As a matter of fact, in \cite{Pro71}, this was not assumed as well.

		\item In general, we do not assume $ \Omega $ or $ X $ is Hausdorff (as well as $ CV_0W $); this was also done in \cite{Bur05, Tim05} where $ \Omega $ is not assumed to be Hausdorff. But, in some concrete applications, we usually need one of $ \Omega, X $ fulfills the Hausdorff property; see \autoref{sec:application} for details.

		\item Based on our definition of $ (A, v) $-antisymmetric set, \autoref{thm:A} recovers the complex case which cannot be done in \cite{Pro88, Pro94, PK02, Tim05} (as they chose $ A \subset C(\Omega, [0,1]) $; see also \autoref{exa:anti} \eqref{ee}). Also, this theorem contains the bounded case in the weighted approximation problem studied in \cite{Nac65, Pro71, Sum71}; see \autoref{rmk:anti}.
	\end{enumerate}
\end{rmk}

\section{some applications} \label{sec:application}

\subsection{A distance formula}

Let $ C_0(\Omega, X) $ be endowed with the uniform topology (see \autoref{exa:spaces} \eqref{exa:uniform}). Then we have the following which generalizes a partial result of Burlando \cite{Bur05}.
\begin{thm}[See Burlando {\cite[Theorem 3.11]{Bur05}}] \label{thm:dis}
	Assume the following non-degenerate condition holds: for every $ t \in \Omega $, there is $ \varphi \in C_0(\Omega, [0,1]) $ such that $ \varphi(t) \neq 0 $. If $ X $ is Hausdorff and $ X_0 $ is a convex subset of $ X $, then for all $ p \in \mathcal{A} $ and $ f \in C_0(\Omega, X) $, we have
	\[\label{equ:a1}\tag{$ \divideontimes $}
	\max_{t \in \Omega} d_{p} (f(t), X_0) = d_{p, \Omega} (f, C_0(\Omega, X_0)),
	\]
	where $ d_{p} (f(t), X_0) \triangleq \inf \{ p(f(t) - x): x \in X_0 \} $.
\end{thm}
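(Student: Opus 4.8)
The plan is to derive the formula from the generalized Machado theorem by taking $ A = C_0(\Omega, [0,1]) $, $ W = C_0(\Omega, X) $, $ W_0 = C_0(\Omega, X_0) $ and $ V = \{1\} $, so that $ d_{p,\Omega}(f, W_0) $ is exactly $ \inf_{g \in W_0} \sup_{t} p(f(t) - g(t)) $, the right-hand partner of the claimed identity. The inequality $ \max_t d_p(f(t), X_0) \le d_{p,\Omega}(f, W_0) $ is the trivial one: any competitor $ g \in W_0 $ takes values in $ X_0 $, so $ p(f(t) - g(t)) \ge d_p(f(t), X_0) $ pointwise, whence $ \sup_t p(f(t) - g(t)) \ge \sup_t d_p(f(t), X_0) $. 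For the reverse inequality I first check that $ A $ is a multiplier of $ W_0 $. For $ \varphi \in C_0(\Omega,[0,1]) $ and $ g_1, g_2 \in W_0 $ the combination $ \varphi g_1 + (1-\varphi) g_2 $ is, at each point, a convex combination of values in $ X_0 $, so it stays $ X_0 $-valued precisely because $ X_0 $ is convex; and since $ p(\varphi g_1 + (1-\varphi) g_2) \le \max\{ p(g_1), p(g_2) \} $ pointwise, its $ \{\,\cdot \ge \epsilon\,\} $-superlevel sets sit inside a union of two compacta, so it again lies in $ C_0(\Omega, X_0) = W_0 $. (The algebra shortcut ``$ A W_0 \subset W_0 $'' is unavailable here, since $ \varphi g_1 $ need not be $ X_0 $-valued; it is convexity that makes the multiplier property hold.)

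Since $ A \subset C(\Omega,[0,1]) $ is a multiplier of $ W_0 $, I apply \autoref{cor:A}\,\eqref{cor:Ab} (equivalently \autoref{thm:A} with $ v = 1 $) to produce $ x_0 \in \Omega $ with $ d_{p,S}(f, W_0) = d_{p,\Omega}(f, W_0) $ for $ S := [x_0]_A $, these being the maximal antisymmetric sets by \autoref{exa:anti}(a). The role of the non-degenerate hypothesis is to make $ S $ compact: choosing $ \varphi \in C_0(\Omega,[0,1]) $ with $ \varphi(x_0) > 0 $, antisymmetry forces $ \varphi \equiv \varphi(x_0) $ on $ S $, so $ S \subset \{ t : \varphi(t) \ge \varphi(x_0) \} $, which is compact, and $ S $ is closed; hence $ S $ is compact. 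Consequently $ t \mapsto d_p(f(t), X_0) $, which is continuous (being $ 1 $-Lipschitz in $ f(t) $ for $ p $ and composed with the continuous $ f $), attains its supremum over $ S $, and this already explains why the left-hand side is a genuine maximum. (Hausdorffness of $ X $ enters harmlessly here: on topologically indistinguishable points of $ \Omega $ the continuous $ X $-valued $ f $ is automatically constant.)

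It remains to prove $ d_{p,S}(f, W_0) \le \max_{t \in S} d_p(f(t), X_0) =: M_S $; combined with the trivial inequality and $ M_S \le \max_{t \in \Omega} d_p(f(t),X_0) $ this closes the loop and pins the global maximum to a point of $ S $. This simultaneous-approximation step is the main obstacle. The idea is a convex gluing: for each $ s \in S $ choose $ x_s \in X_0 $ with $ p(f(s) - x_s) < M_S + \epsilon $, use continuity of $ f $ to pass to an open $ U_s \ni s $ on which $ p(f(\cdot) - x_s) < M_S + \epsilon $, extract from compactness a finite subcover $ U_{s_1}, \dots, U_{s_k} $ of $ S $, and assemble $ g = \sum_i \varphi_i x_i + \bigl(1 - \sum_i \varphi_i\bigr) g_0 $, where $ \{\varphi_i\} \subset C_0(\Omega,[0,1]) $ is subordinate to the cover with $ \sum_i \varphi_i = 1 $ on $ S $ and $ g_0 \in W_0 $ is a fixed background element. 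Convexity of $ X_0 $ keeps $ g $ valued in $ X_0 $, the $ C_0(\Omega,[0,1]) $ weights keep $ g $ in $ C_0 $, and on $ S $ one gets $ p(f - g) \le \sum_i \varphi_i\, p(f - x_i) \le M_S + \epsilon $. The delicate point, and where I expect to spend the most care, is producing these $ C_0 $ ``partition of unity'' weights adapted to the cover over a space $ \Omega $ that is merely topological (not assumed locally compact, normal, or even Hausdorff); here one leans on the compactness of $ S $ together with the non-degenerate condition to manufacture enough $ C_0(\Omega,[0,1]) $ bump functions, rather than invoking a classical partition of unity. Letting $ \epsilon \to 0 $ then yields $ d_{p,S}(f, W_0) \le M_S $, and the claimed equality follows.
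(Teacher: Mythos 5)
Your proposal takes a genuinely different route from the paper's, and --- up to one unfinished step --- it is sound. The paper works with $ A = C(\Omega,[0,1]) $ rather than $ C_0(\Omega,[0,1]) $ and never needs compactness of any antisymmetric set: it shows that every element of $ C_0(\Omega,X) $ is constant on each class $ [x]_A $, by rescaling the bounded functions $ \mathrm{Re}\, x^* f $, $ x^* \in X^* $, into $ C(\Omega,[0,1]) $ and using that $ X^* $ separates points of the locally convex Hausdorff space $ X $. Hence, after \autoref{cor:A} \eqref{cor:Ab}, the localized distance $ d_{p,[s]_{A}}(f, C_0(\Omega,X_0)) $ collapses to the pointwise quantity $ \inf\{p(f(s)-g(s)): g \in C_0(\Omega,X_0)\} $, and the non-degenerate condition enters only to identify this with $ d_p(f(s),X_0) $. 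Two consequences of the comparison: Hausdorffness of $ X $ is essential to the paper's argument (via Hahn--Banach separation), not ``harmless'' as your aside suggests; conversely, your argument never uses it at all, so your route, once completed, proves the formula without that hypothesis. What your route costs is a genuine simultaneous-approximation step --- a convex gluing over the compact set $ S = [x_0]_{A} $, in the spirit of the proof of \autoref{thm:ai0} --- which the paper's reduction to a single point avoids entirely. Your compactness claim for $ S $ is correct: $ S $ is closed and contained in the compact set $ \{t: \varphi(t) \ge \varphi(x_0)\} $.

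The gap is exactly the step you deferred: over a space $ \Omega $ that is merely topological there is no Urysohn lemma and no partition-of-unity theorem, so the family $ \{\varphi_i\} \subset C_0(\Omega,[0,1]) $ cannot simply be ``manufactured''; it must be exhibited, and this is where the actual content of your proof lies. It can be exhibited, by two observations. First, take the cover itself to consist of cozero sets: let $ U_i = \{t: p(f(t)-x_i) < M_S+\epsilon\} = \{h_i > 0\} $, where $ h_i = \max(0,\, M_S+\epsilon - p(f(\cdot)-x_i)) $ is continuous. By compactness of $ S $ one has $ \sum_j h_j \ge \delta_0 > 0 $ on $ S $, so $ \psi_i := h_i / \max(\sum_j h_j, \delta_0) $ is continuous, $ [0,1] $-valued, vanishes off $ U_i $, and $ \sum_i \psi_i \le 1 $ with equality on $ S $. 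Second, the non-degenerate condition plus compactness of $ S $ yields a cutoff $ \chi \in C_0(\Omega,[0,1]) $ with $ \chi \equiv 1 $ on $ S $: choose finitely many $ \phi_j \in C_0(\Omega,[0,1]) $ and constants $ c_j > 0 $ with $ S \subset \bigcup_j \{\phi_j > c_j\} $, and put $ \chi = \min(1, \sum_j \phi_j/c_j) $. Then $ \varphi_i := \chi \psi_i \in C_0(\Omega,[0,1]) $, these sum to $ 1 $ on $ S $ and to at most $ 1 $ everywhere, and are positive only inside $ U_i $; with them your $ g = \sum_i \varphi_i \otimes x_i + (1 - \sum_i \varphi_i)\, g_0 $ works verbatim, giving $ d_{p,S}(f,W_0) \le M_S + \epsilon $ and closing the chain. (One caveat you share with the paper: your background element $ g_0 $ requires $ C_0(\Omega,X_0) \neq \emptyset $, an assumption tacit in the statement itself --- the paper's ``non-degenerate condition'' step needs it too --- so it is not a defect of your proposal.)
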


\begin{proof}
	Fix $ p \in \mathcal{A} $.
	As $ X_0 $ is convex, we see $ A = C(\Omega, [0,1]) $ is a multiplier of $ C_0(\Omega, X_0) $. Next we show if $ y \in [x]_{A} $, then $ f(x) = f(y) $. Since $ f \in C_0(\Omega, X) $, we have $ \sup\{ p(f(x)): x \in \Omega \} < \infty $. For $ x^* \in X^* $ (the dual space of $ X $), assume $  |\mathrm{Re} x^*f(t)| \leq d $ for some $ d > 0 $ depending on $ x^*, p $. Then for $ \phi(t) = (2d)^{-1} (\mathrm{Re} x^*f(t) + d ) $, one has $ \phi \in C(\Omega, [0,1]) $. Thus, $ \mathrm{Re} x^*f(x) = \mathrm{Re} x^*f(y) $. This shows that $ f(x) = f(y) $; note that $ X^* $ separates $ X $ for $ X $ is locally convex and Hausdorff (see \cite{Rud91}). By \autoref{cor:A} \eqref{cor:Ab}, we have
	\begin{align*}
	\sup_{t\in \Omega} d_{p} (f(t), X_0) & \leq d_{p, \Omega}(f, C_0(\Omega, X_0)) \\
	& = d_{p, [s]_{A}} (f, C_0(\Omega, X_0)) \quad \text{~for some $ s \in \Omega $}\\
	& = \inf\{ p(f(s), g(s)): g \in C_0(\Omega, X_0) \} \\
	& = \inf\{ p(f(s), z): z \in X_0 \} ~\text{(non-degenerate condition)} \\
	& = d_{p} (f(s), X_0).
	\end{align*}
	The proof is complete.
\end{proof}

\begin{rmk}
	Let us discuss the \emph{non-degenerate condition}: for every $ t \in \Omega $, there is $ \varphi \in C_0(\Omega, [0,1]) $ such that $ \varphi(t) \neq 0 $. Let $ A = C(\Omega, [0,1]) $.
	\begin{enumerate}[(a)]
		\item If $ \Omega $ is compact, then the non-degenerate condition is clearly true as in this case $ C_0(\Omega, [0,1]) = C(\Omega, [0,1]) $. In fact, this is the case discussed in \cite{Bur05}.
		\item When $ \Omega $ is Hausdorff, it is well known that the non-degenerate condition is equivalent to that $ \Omega $ is locally compact.
		\item If non-degenerate condition holds, then $ \Omega $ must be locally compact and for every $ x_0 \in \Omega $, there is a compact neighborhood $ U_{x_0} $ of $ x_0 $ such that $ \bigcup_{x \in U_{x_0}} [x]_{A} = U_{x_0} $, i.e., $ \bigcup_{x \in U_{x_0}} [x]_{A} = U_{x_0} = \{ x: \varphi(x) \geq \varphi(x_0) / 2 \} $ where $ \varphi \in C_0(\Omega, [0,1]) $ such that $ \varphi(x_0) \neq 0 $; particularly $ [x]_{A} $ is compact in $ \Omega $.

		Let us show the converse is also true. The equivalence classes $ [x]_{A} $, $ x \in \Omega $ define a quotient space $ \widetilde{\Omega} $ which is Hausdorff and locally compact and a quotient map $ \tau: x \to [x]_{A} $. The condition now means that $ \tau $ is proper, i.e., for every compact subset $ \widetilde{M} \subset \widetilde{\Omega} $, $ \tau^{-1}(\widetilde{M}) $ is compact. More precisely, let $ \{x_{t}\} $ be a net in $ \tau^{-1}(\widetilde{M}) $, then there is a subnet $ \{\tau(x_{t_{\delta}})\} $ such that $ \tau(x_{t_{\delta}}) \to [\hat{x}]_{A} \in \widetilde{M} $. Since $ U_{\hat{x}} = \tau^{-1} \tau(U_{\hat{x}}) $, and $ \tau(U_{\hat{x}}) $ is a neighborhood of $ [\hat{x}]_{A} $, one gets $ x_{t_{\delta}} \in U_{\hat{x}} $ for $ \delta \geq \delta_0 $. Due to the compactness of $ U_{\hat{x}} $, we obtain $ \{x_{t_{\delta}}\} $ has a convergent subnet. For every $ t \in {\Omega} $, now we have $ \widetilde{\varphi} \in C_0(\widetilde{\Omega}, [0,1]) $ such that $ \widetilde{\varphi}([t]_{A}) \neq 0 $. Set $ \varphi = \widetilde{\varphi} \circ \tau $, which is the desired function.

		\item We mention that in \cite{Bur05}, the author also proved that when $ \Omega $ is compact and $ \widetilde{\Omega} $ is totally disconnected, the equality \eqref{equ:a1} holds without assuming $ X_0 $ is convex. The examples given in \cite{Bur05} also yield that when $ \Omega $ is compact, the condition that $ \widetilde{\Omega} $ is totally disconnected or $ X_0 $ is convex in some sense is optimal.
	\end{enumerate}
\end{rmk}

\subsection{Stone--Weierstrass type of results}

\begin{thm}[See Timofte {\cite[Theorem 7]{Tim05}}] \label{thm:nonseperate}
	Let $ A $ be a non-empty subset of $ C(\Omega, \mathbb{C}) $ and $ W $ a topological vector subspace of $ C(\Omega, X) $.
	Take a subset $ W_0 $ of $ CV_0 W $. Assume $ A $ is a multiplier of $ W_0 $ and $ S \subset \rho_{W_0} $ for every $ (A, v) $-antisymmetric set $ S $ ($ v \in V $). Let
	\[
	\mathcal{W} = \{ f \in CV_0W: \rho_{W_0} \subset \rho_{\{f\}}, f(t) \in \overline{W_0(t)} ~\text{for all}~ t \in \Omega \},
	\]
	and
	\begin{multline*}
	\mathcal{W}_0 = \{ f \in CV_0W: S \subset \rho_{\{f\}} ~\text{for all $ (A, v) $-antisymmetric set $ S $ and all}~ v \in V,\\ f(t) \in \overline{W_0(t)} ~\text{for all}~ t \in \Omega \}.
	\end{multline*}
	Then $ \overline{W_0} = \mathcal{W}_0 = \mathcal{W} $ in $ CV_0 W $.
\end{thm}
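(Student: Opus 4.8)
The plan is to prove the ring of inclusions
\[
\mathcal{W} \subset \mathcal{W}_0 \subset \overline{W_0} \subset \mathcal{W},
\]
from which all three sets coincide. The first inclusion is immediate from the standing hypothesis: if $f \in \mathcal{W}$ then $\rho_{W_0} \subset \rho_{\{f\}}$, and since every $(A,v)$-antisymmetric set $S$ satisfies $S \subset \rho_{W_0}$ by assumption, we get $S \subset \rho_{\{f\}}$; as the range condition $f(t) \in \overline{W_0(t)}$ is common to both sets, $f \in \mathcal{W}_0$. The reverse passage $\mathcal{W}_0 \subset \mathcal{W}$ is \emph{not} direct, because an arbitrary $T \subset \rho_{W_0}$ need not be antisymmetric; this is exactly why we route it through $\overline{W_0}$.

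The engine of the argument is the middle inclusion $\mathcal{W}_0 \subset \overline{W_0}$, which I would prove with \autoref{thm:A}. Fix $f \in \mathcal{W}_0$; since the seminorms $|\cdot|_{v,p}$ are directed under condition (a), it suffices to show that $a = d_{v,p,\Omega}(f, W_0)$ vanishes for each single pair $v \in V$, $p \in \mathcal{A}$. By \autoref{thm:A} there is an $(A,v)$-antisymmetric set $S$ with $d_{v,p,S}(f, W_0) = a$. On $S$ the function $f$ is constant, since $f \in \mathcal{W}_0$ forces $S \subset \rho_{\{f\}}$; likewise every $g \in W_0$ is constant on $S$, because $S \subset \rho_{W_0}$ by hypothesis. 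Fixing $s_0 \in S$, the condition $f(s_0) \in \overline{W_0(s_0)}$ lets me choose $g \in W_0$ with $p(f(s_0) - g(s_0))$ as small as desired; as $f - g$ is then constant on $S$ with value $f(s_0) - g(s_0)$, I obtain
\[
d_{v,p,S}(f, W_0) \leq |f - g|_{v,p,S} = p\big(f(s_0) - g(s_0)\big)\,\sup_{x \in S} v(x).
\]
Forcing the right-hand side to $0$ gives $a = 0$, as wanted.

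The point that needs care here, and the step I expect to be the main obstacle, is the finiteness of $\sup_{x \in S} v(x)$. This is where membership in the weighted space is genuinely used: because $f - g \in CV_0 W$, the set $\{x : v(x) p(f(x) - g(x)) \geq \delta\}$ is compact for every $\delta > 0$, and on $S$ this set is precisely $\{x \in S : v(x) \geq \delta / c\}$ with $c = p(f(s_0) - g(s_0))$ (taking $c > 0$; if some $g$ gives $c = 0$ we are already done). Since $S$ is closed and $v$ is upper semicontinuous, $v$ attains a finite maximum on this compact set and is bounded by $\delta / c$ off it, so $\sup_S v < \infty$. Thus the weighted norm collapses to a pointwise estimate on the antisymmetric set, which is the heart of the matter.

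Finally, for $\overline{W_0} \subset \mathcal{W}$ I would check that $\mathcal{W}$ is closed in $CV_0 W$ and contains $W_0$; the latter is clear, since any $g \in W_0$ trivially satisfies $\rho_{W_0} \subset \rho_{\{g\}}$ and $g(t) \in W_0(t) \subset \overline{W_0(t)}$. Closedness follows by noting that weighted convergence $f_\alpha \to f$ forces pointwise convergence $f_\alpha(t) \to f(t)$ at every $t$ carrying a positive weight, so that both the range condition $f(t) \in \overline{W_0(t)}$ (closedness of $\overline{W_0(t)}$) and the relation $\rho_{W_0} \subset \rho_{\{f\}}$ pass to the limit. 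Under conditions (a) and (b) on $V$ every point carries a positive weight and $CV_0 W$ is Hausdorff, which is what keeps these limiting arguments clean. Combining the three inclusions closes the loop and yields $\overline{W_0} = \mathcal{W}_0 = \mathcal{W}$.
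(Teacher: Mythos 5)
Your proposal is correct in substance and follows essentially the same route as the paper: the same cycle of inclusions, with the only substantive step $\mathcal{W}_0 \subset \overline{W_0}$ extracted from \autoref{thm:A} exactly as the paper does it --- constancy of $f$ and of every $g \in W_0$ on the antisymmetric set $S$, finiteness of $\sup_S v$ coming from membership in the weighted space, and pointwise density $f(s_0) \in \overline{W_0(s_0)}$ collapsing $d_{v,p,S}(f,W_0)$ to zero. Two points of comparison are worth recording. First, your finiteness argument for $\sup_S v$ uses that $S$ is closed, which is not part of the \emph{statement} of \autoref{thm:A} (its proof does produce a closed $S$, and alternatively one may replace $S$ by $\overline{S}$, which is again $(A,v)$-antisymmetric and still realizes the distance). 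The paper's own version sidesteps this: since $v\cdot p(f-g_0)$ is upper semicontinuous with compact superlevel sets, it is bounded on all of $\Omega$ by some $C_1$, and then $\sup_S v \leq C_1\, p(f(s_0)-g_0(s_0))^{-1}$ with no closedness needed; this is the cleaner route. Second, you invoke conditions (a) and (b) on $V$ (directedness, to reduce closure membership to the vanishing of each single $d_{v,p,\Omega}(f,W_0)$; positivity, to prove $\overline{W_0} \subset \mathcal{W}$), and implicitly the Hausdorffness of $X$ when passing $\rho_{W_0} \subset \rho_{\{f\}}$ to limits --- none of which are hypotheses of the theorem. Formally this means you prove a slightly restricted statement; but note that the paper's proof makes exactly the same leaps silently (it ends with ``$d_{v,p,\Omega}(f,W_0)=0$ for every $v,p$, that is, $f \in \overline{W_0}$,'' and dismisses $\overline{W_0} \subset \mathcal{W} \subset \mathcal{W}_0$ as clear), and some such assumptions are genuinely needed: with the zero weight, or with non-Hausdorff $X$, one can make $\overline{W_0} \subset \mathcal{W}_0$ fail. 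So this is not a defect of your argument relative to the paper's; you made explicit a soft spot in the stated generality that the paper leaves hidden behind ``clearly.''
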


\begin{proof}
	Clearly, $ \overline{W_0} \subset \mathcal{W} \subset \mathcal{W}_0 $. Let $ f \in \mathcal{W}_0 $. For every $ v \in V $ and $ p \in \mathcal{A} $, by \autoref{thm:A}, there is an $ (A, v) $-antisymmetric set $ S $ such that $ d_{v, p, S}(f, W_0) = d_{v, p, \Omega}(f, W_0) $. Take $ t_0 \in S $. As $ S \subset \rho_{W_0} $ and $ S \subset \rho_{\{f\}} $, we have $ g(t) = g(t_0) $ and $ f(t) = f(t_0) $ for any $ t \in S $ and $ g \in W_0 $. We can assume $ v|_S $ is bounded; indeed, if for any $ g \in W_0 $, $ p(f(t_0) - g(t_0)) = 0 $, then $ d_{v, p, S}(f, W_0) = 0 $ and the result follows, else there is $ g_0 \in W_0 $ such that $ p(f(t_0) - g_0(t_0)) \neq 0 $, then due to $ \sup_{t \in \Omega} |v(t)p(f(t) - g_0(t))| < C_1 $ for some $ C_1 $, we have $ \sup_{t \in S}|v(t)| \leq C_1 |p(f(t_0) - g_0(t_0))|^{-1} < \infty $.
	For any $ \varepsilon > 0 $, choose $ t'_{g} \in S $ such that $ v(t'_{g})p(f(t'_{g}) - g(t'_{g})) \geq \sup_{t \in S} v(t)p(f(t) - g(t)) - \varepsilon $. Now by $ f(t_0) \in \overline{W_0(t_0)} $, we have
	\[
	\inf_{g \in W_0} v(t'_{g})p(f(t'_{g}) - g(t'_{g})) = \inf_{g \in W_0} v(t'_{g})p(f(t_0) - g(t_0)) \leq \sup_{t \in S}|v(t)| \inf_{g \in W_0} p(f(t_0) - g(t_0)) = 0,
	\]
	i.e., $ \inf_{g \in W_0} \sup_{t \in S} v(t)p(f(t) - g(t)) \leq \varepsilon $, which yields $ d_{v, p, S}(f, W_0) = 0 $. Therefore, $ d_{v, p, \Omega}(f, W_0) = 0 $ for every $ v \in V $ and $ p \in \mathcal{A} $, that is, $ f \in \overline{W_0} $. The proof is complete.
\end{proof}

\begin{rmk}
	In \cite{Tim05}, Timofte proved $ \overline{W_0} = \mathcal{W} $ (but see also \cite[Proposition 3]{Tim05}) in the case $ CV_0W $ is one of the spaces listed in \autoref{exa:spaces} (especially all the weights are bounded) and $ A \subset C(\Omega, [0,1]) $; note that now $ S \subset \rho_{W_0} $ for every $ A $-antisymmetric set $ S $ is equivalent to  $ \rho_{A} \subset \rho_{W_0} $.
\end{rmk}

In analogy with \cite[Definition 3.5]{Pro71}, we say a non-empty subset $ A $ of $ C(\Omega, \mathbb{C}) $ is \emph{symmetric} if very maximal $ A $-antisymmetric set (or $ (A, v) $-antisymmetric set) reduces to a point (see e.g. \autoref{exa:anti} \eqref{symmetric}). The following result now becomes a simple consequence of \autoref{thm:nonseperate} which is a little bit more general than Nachbin \cite[Theorem 2]{Nac65}, Prolla \cite{Pro71} and Summers \cite{Sum71}; here also note that the \emph{bounded condition} of $ A $ (see \autoref{def:bounded}) in general is not needed (but see also the \autoref{def:anti} of antisymmetric set and \autoref{rmk:anti}).
\begin{cor}[Localization Theorem] \label{cor:localization}
	Let $ A, CV_0W, W_0 $ be as in \autoref{thm:nonseperate}. Assume $ A $ is a multiplier of $ W_0 $ and is symmetric. Then $ f \in \overline{W_0} $ if and only if $ f(t) \in \overline{W_0(t)} $ for every $ t \in \Omega $.
\end{cor}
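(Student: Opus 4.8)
The plan is to derive this corollary directly from \autoref{thm:nonseperate}, the only real content being to translate the hypothesis that $A$ is symmetric into a statement that trivializes the two conditions appearing there. Recall that symmetry means every maximal $(A,v)$-antisymmetric set reduces to a single point. The first step I would carry out is to upgrade this to the assertion that \emph{every} $(A,v)$-antisymmetric set, not merely the maximal ones, is a singleton. To see this I would check that the family of $(A,v)$-antisymmetric sets is closed under unions of chains: if $\{S_\alpha\}$ is a chain of such sets and $f \in A$ maps $S = \bigcup_\alpha S_\alpha$ into $[0,1]$, then $f|_{S_\alpha}$ is constant for each $\alpha$, and the nesting of the $S_\alpha$ forces these constants to coincide, so $f|_S$ is constant. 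By the Hausdorff maximality theorem every $(A,v)$-antisymmetric set is then contained in a maximal one, which by symmetry is a point; hence the set itself is a point.

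With this reduction in hand, I would verify the hypotheses of \autoref{thm:nonseperate}. That $A$ is a multiplier of $W_0$ is assumed. The remaining hypothesis, namely $S \subset \rho_{W_0}$ for every $(A,v)$-antisymmetric set $S$ and every $v \in V$, holds vacuously: a singleton $S = \{t\}$ automatically satisfies $S \subset \rho_{W_0}$, since the defining condition $g(x) = g(y)$ for $x,y \in S$ is empty when $S$ has one element. Thus \autoref{thm:nonseperate} applies and gives $\overline{W_0} = \mathcal{W}_0$.

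It then remains to identify $\mathcal{W}_0$ with the set $\{ f \in CV_0 W : f(t) \in \overline{W_0(t)} \text{ for all } t \in \Omega \}$. The definition of $\mathcal{W}_0$ imposes two requirements on $f$: that $S \subset \rho_{\{f\}}$ for every $(A,v)$-antisymmetric set $S$, and that $f(t) \in \overline{W_0(t)}$ for all $t$. By the first step every such $S$ is a singleton, so the first requirement is satisfied by every $f \in CV_0 W$ for the same reason as above, and $\mathcal{W}_0$ collapses to exactly the pointwise membership condition. Combining this with $\overline{W_0} = \mathcal{W}_0$ yields both implications of the stated equivalence at once.

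I expect the only step that is not a matter of unwinding definitions to be the first one, the passage from maximal $(A,v)$-antisymmetric sets to arbitrary ones, which hinges on the easily overlooked fact that a union of a chain of antisymmetric sets is again antisymmetric. Everything downstream of that observation is routine bookkeeping against the conclusion of \autoref{thm:nonseperate}.
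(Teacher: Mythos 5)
Your proof is correct and takes essentially the same route as the paper, which presents this corollary as a direct consequence of \autoref{thm:nonseperate} with no further argument. The one piece of content you add---the chain-union/Zorn observation that every $(A,v)$-antisymmetric set lies in a maximal one and hence, by symmetry, is a singleton (note this is genuinely needed, since subsets of antisymmetric sets need not be antisymmetric under \autoref{def:anti})---is sound and is precisely the bookkeeping the paper leaves implicit.
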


The following corollary is essentially due to von Neumann (see also \cite{Jew63}) in the setting that $ \Omega $ is compact, $ 0, 1, c \in M $ for some constant $ 0 < c < 1 $ and $ S = \emptyset $.

\begin{cor}[Localization Theorem for {$ C(\Omega, [0,1]) $}]\label{cor:C01}
	Let $ M \subset C(\Omega, [0,1]) $ such that it is a multiplier of itself (i.e. for all $ \varphi \in M $, $ \varphi M + (1 - \varphi) M \subset M $) and $ S $ a closed subset of $ \Omega $. Assume that 
	\begin{enumerate}[(i)]
		\item $ M $ separates $ \Omega \setminus S $,
		
		\item for every $ t \in \Omega \setminus S $, $ \exists m_{i, t} \in M $, $ i = 1, 2, 3 $, such that $ m_{1, t}(t) = 0 $, $ m_{2, t}(t) = 1 $ and $ 0 < m_{3, t}(t) < 1 $,
		
		\item $ M|_S = \{ 0 \} $. 
	\end{enumerate}
	Then $ \overline{M} = \{ f \in C(\Omega, [0,1]): f|_{S} = 0 \} $ where the topology is taken as the compact-open topology or the strict topology (see \autoref{exa:spaces} \eqref{exa:co} \eqref{exa:strict}).
\end{cor}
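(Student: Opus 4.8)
The plan is to derive the statement from \autoref{thm:nonseperate} applied with $ X = \mathbb{R} $ and $ A = W_0 = M $, choosing $ W = CV_0 W = C(\Omega, \mathbb{R}) $ together with $ V = \{\chi_K : K \text{ compact}\} $ in the compact-open case (see \autoref{exa:spaces} \eqref{exa:co}), respectively $ W = CV_0 W = C_b(\Omega, \mathbb{R}) $ with $ V = C_0(\Omega, \mathbb{R}_+) $ in the strict case (see \autoref{exa:spaces} \eqref{exa:strict}). Since every element of $ M \subset C(\Omega,[0,1]) $ is bounded, $ M \subset CV_0 W $ in both cases. The assumption that $ M $ is a multiplier of itself is precisely the requirement that $ A $ be a multiplier of $ W_0 $. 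Moreover, because $ A = M \subset C(\Omega,[0,1]) $, the defining condition of \autoref{def:anti} degenerates: every $ f \in M $ already maps any set into $ [0,1] $, so a set $ T \subset \supp v $ is $ (A,v) $-antisymmetric exactly when every $ g \in M $ is constant on $ T $, i.e. when $ T \subset \rho_{W_0} $. Hence the remaining hypothesis of \autoref{thm:nonseperate} holds automatically, and the theorem gives $ \overline{M} = \mathcal{W} $; it then remains only to identify $ \mathcal{W} $.

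First I would compute the equivalence relation $ \rho_{M} $ from (i)--(iii). Distinct $ x, y \in \Omega \setminus S $ are separated by some $ g \in M $ by (i); for $ x \in \Omega \setminus S $ and $ y \in S $ one has $ m_{2,x}(x) = 1 \neq 0 = m_{2,x}(y) $ using (ii) and (iii); and any two points of $ S $ agree under all $ g \in M $ by (iii). Thus the $ \rho_{M} $-classes are the singletons $ \{x\} $ ($ x \in \Omega \setminus S $) together with the single class $ S $, so the condition $ \rho_{W_0} \subset \rho_{\{f\}} $ appearing in $ \mathcal{W} $ reduces to \emph{$ f $ is constant on $ S $}. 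Combining this with $ f(t) \in \overline{M(t)} $ for all $ t $, and noting $ \overline{M(t)} = \{0\} $ for $ t \in S $ by (iii), any $ f \in \mathcal{W} $ must satisfy $ f|_S = 0 $ and $ f(t) \in \overline{M(t)} \subset [0,1] $ everywhere. This already yields the inclusion $ \overline{M} \subset \{ f \in C(\Omega,[0,1]) : f|_S = 0 \} $.

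The hard part will be the reverse inclusion, which hinges on proving $ \overline{M(t)} = [0,1] $ for every $ t \in \Omega \setminus S $; granting this, any $ f \in C(\Omega,[0,1]) $ with $ f|_S = 0 $ satisfies $ f(t) \in \overline{M(t)} $ for all $ t $ and is constant on $ S $, hence lies in $ \mathcal{W} = \overline{M} $. To prove the density claim, fix $ t \notin S $ and set $ E = \overline{M(t)} \subset [0,1] $, a closed set containing $ 0 = m_{1,t}(t) $ and $ 1 = m_{2,t}(t) $ by (ii). Put $ r_0 = m_{3,t}(t) \in (0,1) $. Since $ M $ is a multiplier of itself, $ m_{3,t}\, g + (1 - m_{3,t})\, g' \in M $ for all $ g, g' \in M $; evaluating at $ t $ shows $ M(t) $, and therefore the closed set $ E $, is stable under the continuous fixed-weight combination $ (b,c) \mapsto r_0 b + (1 - r_0) c $.

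Finally I would run a short no-gap argument: whenever $ s < u $ both lie in $ E $, the point $ r_0 s + (1-r_0) u $ is again in $ E $ and lies strictly between $ s $ and $ u $, so between any two points of $ E $ there is a further point of $ E $. Hence $ E $ is dense in $ [\min E, \max E] = [0,1] $, and being closed it equals $ [0,1] $. This identifies $ \mathcal{W} = \{ f \in C(\Omega,[0,1]) : f|_S = 0 \} $ and completes the proof. I expect the only genuinely delicate points to be the correct placement of $ M $ inside the weighted space $ CV_0 W $ for each of the two topologies and the verification that $ E $ is stable under the fixed-weight convex combination; the density step itself is elementary.
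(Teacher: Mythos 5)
Your proposal is correct, and at the structural level it matches the paper: both apply \autoref{thm:nonseperate} with $A = W_0 = M$, observe that since $M \subset C(\Omega,[0,1])$ the $(M,v)$-antisymmetric sets are exactly the subsets of $\supp v$ on which all of $M$ is constant (so the hypothesis of the theorem holds automatically, the relevant classes being the singletons of $\Omega \setminus S$ together with $S$), and reduce the whole statement to the density claim $\overline{M(t)} = [0,1]$ for $t \in \Omega \setminus S$. Where you genuinely depart from the paper is in the proof of that claim. The paper iterates the multiplier property via \autoref{lem:mult} to produce the explicit elements $(1 - (1 - m^{n}_{3,t})^{k})\, m_{1,t} + (1 - m^{n}_{3,t})^{k}\, m_{2,t} \in M$, hence the explicit values $(1 - m^{n}_{3,t}(t))^{k} \in M(t)$, and then invokes the density of $\{(1-r^n)^k : n,k \in \mathbb{N}_+\}$ in $[0,1]$ for fixed $r \in (0,1)$ (citing \cite[Lemma 4.10]{PB17}). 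You instead use the multiplier identity only once, at the single fixed ratio $r_0 = m_{3,t}(t)$: the closed set $E = \overline{M(t)}$ contains $0$ and $1$, is stable under $(b,c) \mapsto r_0 b + (1-r_0)c$, and such a closed set can have no gap, since the endpoints $\alpha < \beta$ of a complementary interval would lie in $E$ while $r_0\alpha + (1-r_0)\beta \in E$ falls strictly between them; hence $E = [0,1]$. Your version is shorter and fully self-contained (it needs neither \autoref{lem:mult} nor the external density lemma), at the cost of being nonconstructive; the paper's version exhibits explicit approximants in $M(t)$, in keeping with the quantitative, Machado-style spirit of \autoref{sec:main}. Both arguments are sound, and both correctly handle the two topologies by the choices of $V$ and $W$ from \autoref{exa:spaces}.
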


\begin{proof}
	By the conditions (i) (ii) on $ M $, all the $ (M, v) $-antisymmetric sets are $ \{x\} $ ($ x \in \Omega \setminus S $) and $ S \cap \supp v $ ($ v \in V $) where $ V $ is taken as in \autoref{exa:spaces} \eqref{exa:co} or \eqref{exa:strict}. We claim that for $ t \in \Omega \setminus S $, $ \overline{M(t)} = [0,1] $. By \autoref{lem:mult}, we get for every $ n, k \in \mathbb{N}_+ $,
	\[
	(1 - (1 - m^{n}_{3,t})^{k}) m_{1, t} + (1 - m^{n}_{3,t})^{k} m_{2, t} \in M,
	\]
	and so $ (1 - m^{n}_{3,t}(t))^{k} \in M(t) $. Since $ 0 < m_{3,t}(t) < 1 $, one can easily see that for every $ \epsilon > 0 $ and $ a \in (0, 1) $, there are $ n, k \in \mathbb{N}_+ $ such that $ |a - (1 - m^{n}_{3,t}(t))^{k}| < \epsilon $ (see also \cite[Lemma 4.10]{PB17}). Therefore, $ \overline{M(t)} = [0,1] $. This shows $ \mathcal{W}_0 $ defined in \autoref{thm:nonseperate} is equal to $ \{ f \in C(\Omega, [0,1]): f|_{S} = 0 \} $; or use \autoref{thm:A} directly. The proof is complete.
\end{proof}

The following type of the Stone--Weierstrass theorem was reproved by many authors in different settings, see e.g. \cite{Gli63, Pro71, Tim05} etc; usually for the case $ 1 \in A $ and $ S = \emptyset $ (but see also \cite[Corollary 2.11]{PM73}).

\begin{thm}[Stone--Weierstrass Theorem]\label{thm:SW}
	Let $ A $ be a subalgebra of $ C(\Omega, \mathbb{C}) $ satisfying the bounded condition with respect to $ V $ (see \autoref{def:bounded}) and $ W $ a topological vector subspace of $ C(\Omega, X) $. Let $ S $ be a closed subset of $ \Omega $. Take a linear subspace $ G $ of $ CV_0W $ such that it is an $ A $-module (i.e. $ A G \subset G $) and $ G_{S} = 0 $. Assume the following conditions hold:
	\begin{enumerate}[(i)]
		\item $ A $ is self-adjoint (i.e. $ f \in A \Rightarrow \overline{f} \in A $);
		\item $ A $ separates $ \Omega \setminus S $;
		\item $ A_{S} = 0 $, and for every $ t \in \Omega \setminus S $ there is $ \varphi \in A $ such that $ \varphi(t) \neq 0 $;
		\item for every $ \varepsilon > 0 $, $ p \in \mathcal{A} $, and for every $ t \in \Omega \setminus S $, $ y \in X $, there exists $ f \in G $ such that $ p(f (t) - y) < \varepsilon $.
	\end{enumerate}
	Then $ \overline{G} = \{ f \in CV_0W: f|_{S} = 0 \} $ in $ CV_0W $ and $ \overline{A} = \{ \varphi \in C(\Omega, \mathbb{C}): \varphi|_{S} = 0 \} $ in the compact-open topology (see \autoref{exa:spaces} \eqref{exa:co}).
\end{thm}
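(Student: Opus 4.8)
The plan is to apply \autoref{thm:nonseperate} (equivalently \autoref{thm:A}) with the choice $ W_0 = G $, for which I must first pin down the $ (A, v) $-antisymmetric sets. Since $ A $ is a subalgebra and $ G $ is a linear $ A $-module, $ A $ is a multiplier of $ G $. Because $ A $ satisfies the bounded condition with respect to $ V $ (\autoref{def:bounded}), every $ (A, v) $-antisymmetric set $ T \subset \supp v $ is a strong $ A $-antisymmetric set, so I may read off its shape from \autoref{exa:anti}: conditions (i)--(iii), namely self-adjointness, separation of $ \Omega \setminus S $, $ A_S = 0 $, and the non-vanishing of $ A $ at each point of $ \Omega \setminus S $, force the maximal strong $ A $-antisymmetric sets to be exactly the singletons $ \{x\} $ with $ x \in \Omega \setminus S $ together with $ S $. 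The absorbed-point alternative allowed in \autoref{exa:anti} \eqref{symmetric} corresponds to a common zero $ x_0 \in \Omega \setminus S $ of all of $ A $, and it is excluded precisely by the non-vanishing clause in (iii). Hence every $ (A, v) $-antisymmetric set $ T $ is either a singleton contained in $ \Omega \setminus S $ or a subset of $ S $, and in both cases $ T \subset \rho_{G} $: singletons trivially, and subsets of $ S $ because $ G_S = 0 $ makes each $ g \in G $ vanish on $ S $. This verifies the hypothesis of \autoref{thm:nonseperate}.

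Thus \autoref{thm:nonseperate} gives $ \overline{G} = \mathcal{W} $, and it remains to identify $ \mathcal{W} $ with $ \{ f \in CV_0W : f|_S = 0 \} $. For $ t \in S $ we have $ G(t) = \{0\} $, so $ \overline{G(t)} = \{0\} $ (this is where $ X $ Hausdorff is used); this already yields $ \overline{G} \subset \{ f : f|_S = 0 \} $, since $ f \in \mathcal{W} $ forces $ f(t) \in \overline{G(t)} = \{0\} $ at every $ t \in S $. For $ t \in \Omega \setminus S $, condition (iv) says the linear subspace $ G(t) \subset X $ is dense in each single seminorm, and a Hahn--Banach argument (a functional annihilating $ \overline{G(t)} $ is dominated by one $ p \in \mathcal{A} $, using that $ \mathcal{A} $ may be taken directed) upgrades this to $ \overline{G(t)} = X $. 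Finally $ \rho_{G} $ has as classes the singletons $ \{x\} $, $ x \in \Omega \setminus S $, and the set $ S $: indeed $ G $ separates $ \Omega \setminus S $, for given $ x \neq y $ in $ \Omega \setminus S $ one picks $ \varphi \in A $ with $ \varphi(x) \neq \varphi(y) $ by (ii) and $ g \in G $ with $ g(x) \neq 0 $ by (iv), whereupon $ g, \varphi g \in G $ distinguish $ x, y $. Combining these, $ \mathcal{W} = \{ f : f|_S = 0 \} $. Alternatively, the inclusion $ \{ f : f|_S = 0 \} \subset \overline{G} $ follows directly from \autoref{thm:A}: for fixed $ v, p $ the antisymmetric set $ T $ satisfies $ d_{v, p, T}(f, G) = 0 $ (take $ g = 0 $ when $ T \subset S $, and use (iv) at the single point when $ T = \{x\} $), which sidesteps the passage to joint seminorms entirely.

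For the second assertion I would apply the first part with $ X = \mathbb{C} $, $ W = C(\Omega, \mathbb{C}) $, $ G = A $, and $ V = \{ \chi_K : K \text{ compact} \} $ (the compact-open topology, \autoref{exa:spaces} \eqref{exa:co}). Then $ A $ is an $ A $-module and $ A_S = 0 $ by (iii), while the bounded condition is automatic because every $ \supp \chi_K = K $ is compact. The richness condition (iv) holds because for $ t \in \Omega \setminus S $ the set $ A(t) $ is a self-adjoint complex subalgebra of $ \mathbb{C} $ containing a nonzero number by (iii), hence $ A(t) = \mathbb{C} $. The first part then yields $ \overline{A} = \{ \varphi \in C(\Omega, \mathbb{C}) : \varphi|_S = 0 \} $ in the compact-open topology.

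The main obstacle is the first step: extracting the precise list of $ (A, v) $-antisymmetric sets from the algebraic hypotheses, which rests on the bounded-condition reduction to strong antisymmetric sets and on the self-adjoint separation analysis behind \autoref{exa:anti} \eqref{symmetric}, together with transporting the separation and richness of $ A $ onto the module $ G $. The only genuinely analytic point is $ \overline{G(t)} = X $ for $ t \in \Omega \setminus S $, where the per-seminorm hypothesis (iv) must be promoted to density in $ X $ by local convexity; the direct \autoref{thm:A} route shows that this promotion is in fact unnecessary for the approximation itself.
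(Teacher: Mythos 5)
Your proof is correct and takes essentially the same route as the paper's: identify the $(A,v)$-antisymmetric sets as the singletons of $\Omega \setminus S$ together with (subsets of) $S$ using the bounded condition and (i)--(iii), then apply \autoref{thm:nonseperate} (or \autoref{thm:A} directly) with $\overline{G(t)} = X$ off $S$ and $G_S = 0$ (resp. $A(t) = \mathbb{C}$ and $A_S = 0$ for the second assertion). Your additional care --- excluding the absorbed-point alternative of \autoref{exa:anti}~\eqref{symmetric} via the non-vanishing clause of (iii), and noting that the per-seminorm hypothesis (iv) either needs a directedness/Hahn--Banach upgrade to give $\overline{G(t)} = X$ or can be bypassed by the fixed-$(v,p)$ application of \autoref{thm:A} --- only fills in details the paper leaves implicit.
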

\begin{proof}
	Since $ A $ is a subalgebra and satisfies the bounded condition with respect to $ V $, the conditions (i) (ii) (iii) now imply that all the $ (A, v) $-antisymmetric sets are $ \{x\} $ ($ x \in \Omega \setminus S $) and $ S \cap \supp v $ (where $ v \in V $); see e.g. \autoref{rmk:anti}. In particular, if $ C(\Omega, \mathbb{C}) $ is endowed with the compact-open topology, then due to $ A(t) = \mathbb{C} $ for $ t \in \Omega \setminus S $ and $ A_S = 0 $, we have $ \overline{A} = \{ \varphi \in C(\Omega, \mathbb{C}): \varphi|_{S} = 0 \} $ (by \autoref{thm:A} or \autoref{thm:nonseperate}). Similarly, as $ \overline{G(t)} = X $ for $ t \in \Omega \setminus S $ (by condition (iv)) and $ G_S = 0 $, one gets $ \overline{G} = \{ f \in CV_0W: f|_{S} = 0 \} $ in $ CV_0W $; note that $ A $ is a multiplier of $ G $. The proof is complete.
\end{proof}

\subsection{polynomial algebra}\label{sub:polynomial}

For an index $ I $ and a linear subspace $ Y $ of $ X $, we write $ P(Z) \in Y[Z], Z = (Z_i)_{i \in I} $, which means $ P(Z) $ is a finite sum of terms of the type $ x Z_{i_1}Z_{i_2}\cdots Z_{i_m} $, where $ i_1, i_2, \ldots, i_m \in I $ and $ x \in Y $; here we also \emph{assume} $ \{i_1, i_2, \ldots, i_m\} \neq \emptyset $. For $ A \subset C(\Omega, \mathbb{C}) $ and $ X_0 \subset X $, set
\[
A \otimes X_0 = \mathrm{span} \{ \varphi \otimes x: \varphi \in A, x \in X_0 \},
\]
where $ (\varphi \otimes x)(t) \triangleq \varphi(t)x $.

The following result is classical (see e.g. \cite[Theorem 6.1]{Bor76}) especially for the case $ \Omega $ is compact, $ X_0 = X $ and $ S = \emptyset $.

\begin{thm}[Grothendieck--Stone--Weierstrass Theorem]
	Let $ \{ u_{i} \}_{i \in I} \subset CV_0(\Omega, \mathbb{R}) $ such that it satisfies the bounded condition with respect to $ V $ (see \autoref{def:bounded}) and $ S $ a closed subset of $ \Omega $. Assume 
	\begin{enumerate}[(i)]
		\item $ \{ u_{i} \}_{i \in I} $ separates $ \Omega \setminus S $,
		\item $ u_{j}|_S = 0 $ ($ j \in I $), and
		\item for each $ t \in \Omega \setminus S $, there is $ j \in I $ such that $ u_{j}(t) \neq 0 $.
	\end{enumerate}
	Let $ X_0 $ be a dense linear subspace of $ X $. Then the following polynomial algebra
	\[
	\{ P((u_i)_{i \in I}): P(Z) \in X_0[Z], Z = (Z_i)_{i \in I} \}
	\]
	is dense in $ \{ f \in CV_0(\Omega, X): f|_{S} = 0 \} $ (in the topology of $ CV_0(\Omega, X) $).
\end{thm}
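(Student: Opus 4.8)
The plan is to deduce this from the Stone--Weierstrass theorem \autoref{thm:SW}, applied with $ W = C(\Omega, X) $ (so that $ CV_0 W = CV_0(\Omega, X) $), with $ G $ the polynomial algebra in the statement, and with $ A $ the algebra generated over $ \mathbb{R} $ by $ \{u_i\}_{i \in I} $ \emph{without} constant term, i.e. the linear span of the monomials $ u_{i_1} \cdots u_{i_m} $, $ m \geq 1 $. Since each $ u_i $ is real-valued, $ A \subset C(\Omega, \mathbb{R}) \subset C(\Omega, \mathbb{C}) $ is automatically self-adjoint, so hypothesis (i) of \autoref{thm:SW} is free. Hypotheses (ii) and (iii) transfer directly from the assumptions on $ \{u_i\} $: separation of $ \Omega \setminus S $ and the pointwise non-vanishing off $ S $ are inherited because each $ u_i $ belongs to $ A $, while $ A_S = 0 $ holds because every monomial (with nonempty index set) vanishes on $ S $ once each $ u_i|_S = 0 $.

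Before invoking \autoref{thm:SW} I must check the structural requirements: that $ A $ satisfies the bounded condition of \autoref{def:bounded}, and that $ G $ is a linear subspace of $ CV_0(\Omega, X) $ which is an $ A $-module with $ G_S = 0 $. The bounded condition for $ A $ is immediate, since a finite product or sum of functions each bounded on $ \supp v $ is again bounded on $ \supp v $. The module property $ A G \subset G $ and the relation $ G_S = 0 $ are formal: multiplying a monomial in the $ u_i $ by an $ X_0 $-polynomial again produces an $ X_0 $-polynomial with nonempty index set (using that $ X_0 $ is a linear subspace), and every such monomial vanishes on $ S $; linearity of $ G $ is clear.

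The one genuinely substantive point --- and the step I expect to be the main obstacle --- is verifying that $ G \subset CV_0(\Omega, X) $, i.e. that products of the $ u_i $ remain in the weighted space, since a product of $ CV_0 $-functions need not lie in $ CV_0 $ in general. This is precisely where the bounded condition is essential. For a monomial $ u_{i_1} \cdots u_{i_m} $ I would fix $ v \in V $, bound $ |u_{i_2} \cdots u_{i_m}| \leq M_v $ on $ \supp v $, and note that $ v(t) = 0 $ whenever $ t \notin \supp v $; hence the superlevel set $ \{ t : v(t) |u_{i_1}(t) \cdots u_{i_m}(t)| \geq \epsilon \} $ is contained in $ \{ t : v(t) |u_{i_1}(t)| \geq \epsilon / M_v \} $, which is compact because $ u_{i_1} \in CV_0(\Omega, \mathbb{R}) $, and it is closed by upper semicontinuity, hence compact. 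Scaling by $ x \in X_0 $ and applying a seminorm $ p \in \mathcal{A} $ reduces the vector-valued superlevel sets to these scalar ones, so every element of $ G $ lies in $ CV_0(\Omega, X) $.

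Finally I would verify hypothesis (iv) of \autoref{thm:SW}: given $ t \in \Omega \setminus S $, $ y \in X $, $ p \in \mathcal{A} $ and $ \varepsilon > 0 $, choose $ j $ with $ c = u_j(t) \neq 0 $ and, by density of $ X_0 $ in $ X $, an $ x \in X_0 $ with $ p(x - y) < \varepsilon $; then $ f = c^{-1} x \, u_j \in G $ satisfies $ f(t) = x $, so $ p(f(t) - y) < \varepsilon $. With all hypotheses confirmed, \autoref{thm:SW} gives $ \overline{G} = \{ f \in CV_0(\Omega, X) : f|_S = 0 \} $, which is the assertion.
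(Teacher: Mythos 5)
Your proposal is correct and follows essentially the same route as the paper: both apply \autoref{thm:SW} with $A$ the (non-unital, self-adjoint) algebra generated by $\{u_i\}_{i\in I}$ and $G = A \otimes X_0$ the polynomial algebra. The only differences are cosmetic --- you take real rather than complex coefficients for $A$, and you spell out the verifications (notably $G \subset CV_0(\Omega, X)$ via the bounded condition) that the paper dismisses with ``Clearly''.
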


\begin{proof}
	Let $ A = <\{ u_{i} \}_{i \in I}> \triangleq \{ P((u_i)_{i \in I}): P(Z) \in \mathbb{C}[Z], Z = (Z_i)_{i \in I} \} $ (i.e. the self-adjoint algebra generated by $ \{ u_{i} \}_{i \in I} $); in general we do not know whether $ 1 \in A $. Clearly, $ A \subset CV_0(\Omega, \mathbb{C}) $ and satisfies the bounded condition with respect to $ V $. Note that $ A \otimes X_0 = \{ P((u_i)_{i \in I}): P(Z) \in X_0[Z], Z = (Z_i)_{i \in I} \} \subset CV_0(\Omega, X) $ and $ A (A \otimes X_0) \subset A \otimes X_0 $. Now, the result follows from \autoref{thm:SW}.
\end{proof}

From the above theorem, if $ A $ is a unital subalgebra of $ C(\Omega, \mathbb{C}) $ (resp. $ C_b(\Omega, \mathbb{C}) $) which has a real base such that it separates $ \Omega $, and if $ X_0 $ is a dense linear subspace of $ X $, then $ A \otimes X_0 $ is dense in $ C(\Omega, X) $ (resp. $ C_b(\Omega, X) $) in the compact-open topology (resp. in the strict topology).

\begin{exa}
	Let $ \Omega $ be compact. Assume $ \Omega $ or $ X $ is Hausdorff. Let $ X_1 $ be a locally convex topological vector space such that it densely embeds in $ X $. Then $ C(\Omega, X_1) $ is dense in $ C(\Omega, X) $ in the uniform topology.
\end{exa}
\begin{proof}
	Here, note that even $ h \in C(\Omega, X) $ such that $ h(\Omega) \subset X_1 $, this does not mean $ h \in C(\Omega, X_1) $. First assume $ \Omega $ is Hausdorff. Then the self-adjoint algebra $ C(\Omega, \mathbb{C}) $ separates $ \Omega $ and so $ C(\Omega, \mathbb{C}) \otimes X_1 \subset C(\Omega, X_1) $ is dense in $ C(\Omega, X) $. Now suppose $ X $ is Hausdorff. The proof given in \autoref{thm:dis} shows $ [x]_{C(\Omega, \mathbb{C})} \subset \rho_{C(\Omega, X)} $, and so by \autoref{thm:nonseperate} (or \autoref{thm:A}) we also have $ C(\Omega, \mathbb{C}) \otimes X_1 \subset C(\Omega, X_1) $ is dense in $ C(\Omega, X) $. The proof is complete.
\end{proof}

Write $ C_{c}(\Omega, X) = \{ f \in C(\Omega, X) : \supp f ~\text{is compact} \} $.

\begin{exa}
	Let $ \Omega $ be locally compact and $ C_{c}(\Omega, \mathbb{C})(t) \neq 0 $ for all $ t \in \Omega $ (e.g. $ \Omega $ is Hausdorff or $ \Omega $ is compact). Further, assume $ X $ is Hausdorff if $ \Omega $ is not Hausdorff. If $ X_0 $ is dense in $ X $, then $ C_{c}(\Omega, \mathbb{C}) \otimes X_0 $ is dense in $ CV_0(\Omega, X) $. In particular, $ C_{c}(\Omega, X) $ is dense in $ CV_0(\Omega, X) $ and $ C_{c}(\Omega, \mathbb{C}) $ is dense in $ CV_0(\Omega, \mathbb{C}) $.
\end{exa}
\begin{proof}
	Let $ A = C_{c}(\Omega, \mathbb{R}) $ and assume $ \Omega $ is not Hausdorff.
	Note that for all $ t \in \Omega $, $ A(t) \neq 0 $. This implies there is $ h \in C_{c}(\Omega, \mathbb{R}) $ such that $ h(x) = h(y) \neq 0 $ where $ y \in [x]_{A} $. Let $ g \in CV_0(\Omega, X) $ and $ x^* \in X^* $ (the dual space of $ X $), then $ f(t) = h(t)\mathrm{Re}x^*(g(t)) \in C_{c}(\Omega, \mathbb{R}) $ and so $ f(x) = f(y) $. This yields that $ \mathrm{Re}x^*g(x) = \mathrm{Re}x^*g(y) $ and then $ g(x) = g(y) $ (by the Hausdorff property of $ X $). Therefore, by \autoref{thm:nonseperate}, $ C_{c}(\Omega, \mathbb{C}) \otimes X_0 $ is dense in $ CV_0(\Omega, X) $ as $ A $ is a subalgebra and $ A (C_{c}(\Omega, \mathbb{C}) \otimes X_0) \subset C_{c}(\Omega, \mathbb{C}) \otimes X_0 $.

	If $ \Omega $ is Hausdorff, then $ [x]_{A} = \{ x \} $. The result follows from \autoref{thm:SW} directly. This completes the proof.
\end{proof}

In the following, we discuss a vector-valued version of Bernstein problem but in a more concrete way. Let $ \gamma: \mathbb{R} \to \mathbb{R}_+ $ be continuous such that $ \lim_{|t| \to \infty} |t|^n \gamma(t) \to 0 $ for all $ n \in \mathbb{N} $. The Bernstein problem asks for what function $ \gamma $, $ \mathbb{C}[z] $ is dense in $ CV^b_0(\mathbb{R}, \mathbb{C}) $ where $ V^b = \{\gamma\} $; if it is so, then such $ \gamma $ is called a \emph{fundamental weight} \cite{Nac65}. This problem was solved independently by Achieser, Mergelyan, and Pollard; see e.g. \cite{Lub07} for details. Now we concentrate under what Nachbin family (weights) $ V $, the polynomial algebra $ \mathbb{C}[z] \otimes X $ is dense in $ CV_0(\mathbb{R}, X) $. The following result is motivated by \cite[Theorem 2]{Nac65}.

\begin{thm}\label{thm:Ber}
	Let $ W $ be a linear subspace of $ CV_0(\mathbb{R}, X) $ such that it is a $ \mathbb{C}[z] $-module (i.e. $ \mathbb{C}[z] W \subset W $).
	Assume for every $ v \in V $, $ p \in \mathcal{A} $ and $ f \in W $, there is a fundamental weight $ \gamma $ such that
	\[
	v(t)p(f(t)) \leq \gamma(t), ~ t \in \mathbb{R}.
	\]
	Then for $ f \in CV_0(\mathbb{R}, X) $, $ f(t) \in \overline{W(t)} $ for all $ t \in \mathbb{R} $ if and only if $ f \in \overline{W} $.
\end{thm}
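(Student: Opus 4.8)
The plan is to prove the two implications separately; the reverse implication is routine and the forward one carries all the content. For the reverse direction, if $ g_\alpha \to f $ in $ CV_0(\mathbb{R},X) $ with $ g_\alpha \in W $, then for any weight $ v $ with $ v(t) > 0 $ and any $ p \in \mathcal{A} $ one has $ v(t)p(g_\alpha(t)-f(t)) \leq |g_\alpha - f|_{v,p} \to 0 $, so $ g_\alpha(t) \to f(t) $ in $ X $ and $ f(t) \in \overline{W(t)} $; under condition (b) on $ V $ this holds at every point. So I concentrate on the forward direction: fixing $ f \in CV_0(\mathbb{R},X) $ with $ f(t)\in\overline{W(t)} $ for all $ t $, together with $ v \in V $, $ p \in \mathcal{A} $ and $ \varepsilon > 0 $, I would produce $ g \in W $ with $ v(t)p(f(t)-g(t)) < \varepsilon $ for every $ t $, which forces $ d_{v,p,\mathbb{R}}(f,W) = 0 $ and hence $ f \in \overline{W} $.

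The construction is a localization-and-patching argument. Since $ f \in CV_0 $, first choose $ R $ with $ v(t)p(f(t)) < \varepsilon/3 $ for $ |t| > R $. For each $ s \in [-R,R] $, using $ f(s)\in\overline{W(s)} $ pick $ g_s \in W $ with $ p(f(s)-g_s(s)) $ so small that the set $ U_s = \{ t : v(t)p(f(t)-g_s(t)) < \varepsilon/3 \} $ — open because $ v $ is upper semicontinuous and $ p(f-g_s) $ continuous — contains $ s $. Then extract a finite subcover $ U_{s_1},\dots,U_{s_N} $ of $ [-R,R] $ and take a continuous partition of unity $ \theta_1,\dots,\theta_N $ with $ \{\theta_i>0\} $ compactly contained in $ U_{s_i} $, with $ 0\leq\sum_i\theta_i\leq 1 $ everywhere and $ \sum_i\theta_i = 1 $ on $ [-R,R] $.

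The crucial step is to replace each $ \theta_i $ by a polynomial without losing control. By hypothesis applied to $ g_{s_i}\in W $ there is a fundamental weight $ \gamma_i $ with $ v(t)p(g_{s_i}(t)) \leq \gamma_i(t) $; since $ \theta_i $ is bounded and continuous it lies in $ CV^b_0(\mathbb{R},\mathbb{C}) $ for $ V^b=\{\gamma_i\} $, so the defining density property of a fundamental weight yields $ \phi_i \in \mathbb{C}[z] $ with $ \sup_t \gamma_i(t)|\theta_i(t)-\phi_i(t)| < \varepsilon/(3N) $. Setting $ g = \sum_i \phi_i g_{s_i} $, the $ \mathbb{C}[z] $-module and linear structure of $ W $ give $ g \in W $. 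Writing
\[
f - g = \sum_i \theta_i (f - g_{s_i}) + \sum_i (\theta_i - \phi_i) g_{s_i} + \Big(1 - \sum_i \theta_i\Big) f ,
\]
I would estimate $ v\,p $ of the three sums: the first by $ \varepsilon/3 $ (since $ \theta_i(t)>0 $ forces $ t \in U_{s_i} $ and $ \sum_i\theta_i\leq 1 $), the second by $ \sum_i \gamma_i|\theta_i-\phi_i| < \varepsilon/3 $, and the third by $ \varepsilon/3 $ (it vanishes on $ [-R,R] $ while $ v\,p(f) < \varepsilon/3 $ outside). Adding these gives $ v(t)p(f(t)-g(t)) < \varepsilon $ for all $ t $.

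I expect the main obstacle to be exactly this crucial step. Polynomials cannot be localized directly — they are unbounded, so $ A = \mathbb{C}[z] $ is not symmetric, and a reduction to antisymmetric sets via \autoref{thm:A} only reduces to possibly unbounded sets, on which $ \mathbb{C}[z] $ separates nothing; this is why the theorem is not a formal corollary of the localization results. The fundamental-weight hypothesis is precisely the device converting a genuine continuous partition of unity into a polynomial one with arbitrarily small weighted error, and the growth bound $ v\,p(g_{s_i}) \leq \gamma_i $ is what keeps that error controlled after multiplication by the unbounded $ g_{s_i} $. The points demanding care are checking $ \theta_i \in CV^b_0 $ and ensuring the three-term estimate closes using the global constraint $ 0\leq\sum_i\theta_i\leq 1 $.
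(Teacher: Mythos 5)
Your proof is correct, and it shares with the paper the step that carries the real content: Nachbin's device of first approximating $ f $ by a finite sum $ \sum_i b_i u_i $ with $ b_i $ bounded continuous and $ u_i \in W $, and then using the fundamental weights dominating $ v(t)p(u_i(t)) $ to replace each bounded coefficient by a polynomial at small cost in the weighted seminorm (your second sum is exactly the paper's estimate $ \sum_i \gamma_i(t)|b_i(t) - p_i(t)| \leq \varepsilon $). Where you genuinely diverge is in how the intermediate approximation is obtained. The paper enlarges $ W $ to the $ C_b(\mathbb{R}, \mathbb{C}) $-module $ W_1 = \mathrm{span}\{ bu : b \in C_b(\mathbb{R}, \mathbb{C}), u \in W \} $ and invokes \autoref{cor:localization} (hence ultimately \autoref{thm:A}) to conclude $ f \in \overline{W_1} $ from $ f(t) \in \overline{W(t)} \subset \overline{W_1(t)} $; you instead build the $ C_b $-combination $ \sum_i \theta_i g_{s_i} $ by hand, using compactness of $ \{ t : v(t)p(f(t)) \geq \varepsilon/3 \} $ and a partition of unity subordinate to the sets $ U_{s_i} $ (which are indeed open, since $ v \cdot p(f - g_{s_i}) $ is upper semicontinuous). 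The paper's route is shorter given its machinery and is not tied to the line --- the same argument works whenever the bounded multipliers are symmetric on $ \Omega $ --- whereas yours is elementary and self-contained, making \autoref{thm:Ber} independent of the Machado theorem, at the price of invoking local compactness of $ \mathbb{R} $ explicitly. Your diagnosis that $ A = \mathbb{C}[z] $ cannot be fed directly into the localization results is also accurate (every unbounded subset of $ \mathbb{R} $ is $ \mathbb{C}[z] $-antisymmetric in the sense of \autoref{def:anti}, so $ \mathbb{C}[z] $ is far from symmetric), and it explains why both arguments must pass through bounded coefficients; likewise your remark that the ``only if'' direction uses condition (b) on $ V $ is a point of care that the paper's proof leaves implicit.
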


\begin{proof}
	Here, we use the argument due to \cite{Nac65}. Let $ W_1 = \mathrm{span} \{ b u: b \in C_b(\mathbb{R}, \mathbb{C}), u \in W \} $, then $ C_b(\mathbb{R}, \mathbb{C}) W_1 \subset W_1 $ and $ W \subset W_1 \subset CV_0(\mathbb{R}, X) $. Take $ f \in CV_0(\mathbb{R}, X) $ such that $ f(t) \in \overline{W(t)} $ for all $ t \in \mathbb{R} $. By \autoref{cor:localization} (or \autoref{thm:A}), we have $ f \in \overline{W}_1 $. So for every $ \varepsilon > 0 $, $ v \in V $ and $ p \in \mathcal{A} $, there is $ g \in W_1 $ such that $ v(t)p(f(t) - g(t)) \leq \varepsilon $ for all $ t \in \mathbb{R} $. Write $ g(t) = \sum_{i = 1}^{n} b_i(t) u_i(t) $ where $ b_i \in C_b(\mathbb{R}, \mathbb{C}) $ and $ u_i \in W $. Now by assumption we have fundamental weights $ \gamma_i $ such that
	\[
	v(t)p(u_i(t)) \leq \gamma_i(t), ~ t \in \mathbb{R}.
	\]
	Note that $ C_b(\mathbb{R}, \mathbb{C}) $ is contained in $ CV^b_0(\mathbb{R}, \mathbb{C}) $ for $ V^b = \{\gamma_i\} $; in particular, there is $ p_i \in \mathbb{C}[z] $ such that
	\[
	\gamma_i(t)|b_i(t) - p_i(t)| \leq \varepsilon / n, ~ t \in \mathbb{R}.
	\]
	Let $ f_1(t) = \sum_{i = 1}^{n} p_i(t) u_i(t) $. Then $ f_1 \in W $ (as $ \mathbb{C}[z] W \subset W $) and
	\begin{align*}
	v(t)p(f(t) - f_1(t)) & \leq v(t)p(f(t) - g(t)) + v(t)p(g(t) - f_1(t)) \\
	& \leq \varepsilon + v(t)p(\sum_{i = 1}^{n} \{b_i(t) - p_i(t)\} u_i(t)) \\
	& \leq \varepsilon + \sum_{i = 1}^{n} |b_i(t) - p_i(t)| v(t) p(u_i(t)) \\
	& \leq \varepsilon + \sum_{i = 1}^{n} \gamma_i(t)|b_i(t) - p_i(t)| \leq 2 \varepsilon,
	\end{align*}
	i.e., $ f \in \overline{W} $. The proof is complete.
\end{proof}

\begin{exa}\label{exa:ber}
	Let $ V^{\alpha} = \{ W_{\alpha} \} $ where $ W_{\alpha}(t) = \exp^{- |t|^{\alpha}} $ (the so-called Freud’s weight), and $ X_0 $ a dense linear subspace of $ X $. If $ \alpha > 1 $, then $ \mathbb{C}[z] \otimes X_0 $ is dense in $ CV^{\alpha}_0(\mathbb{R}, X) $.
\end{exa}

\begin{proof}
	It is well known that the Freud’s weight $ W_{\alpha} $ is a fundamental weight if and only if $ \alpha \geq 1 $ (see e.g. \cite[Corollary 1.5]{Lub07}). For $ f(t) = p(t)x $ where $ p \in \mathbb{C}[z] $ and $ x \in X_0 $, since $ \mathbb{C}[z] \subset CV^{1}_0(\mathbb{R}, \mathbb{C}) $, we have $ W_{1}(t) p(f(t)) \leq C_{f} < \infty $ for all $ t \in \mathbb{R} $ where $ C_{f} > 0 $ is some constant depending on $ f $. Now we have for $ \alpha > 1 $
	\[
	W_{\alpha} (t) p(f(t) ) \leq C_1W_1(t)W_1(t)p(f(t)) \leq C_1C_{f}W_1(t),
	\]
	where $ \sup_{t}W_{\alpha} (t) (W_1(t))^{-2} \leq C_1 $ (as $ \alpha > 1 $). Therefore, the above inequality holds for all $ f \in \mathbb{C}[z] \otimes X_0 $. The result now follows from \autoref{thm:Ber}.
\end{proof}

\subsection{approximation with interpolation} \label{sub:interpolation}

\begin{lem}\label{lem:ai}
	Let $ A \subset C(\Omega, [0,1]) $ and $ W $ a topological vector subspace of $ C(\Omega, X) $.
	Take a subset $ W_0 $ of $ CV_0 W $. Assume $ A $ is a multiplier of $ W_0 $ and is symmetric (i.e. $ [t]_A = \{ t \} $ for each $ t \in \Omega $).
	Let $ S, B \subset \Omega $, $ T_0 = \{ t_i: i = 1, 2, \ldots, n \} \subset \Omega $ a finite subset, $ x_i \in W_0(t_i) $, $ i = 1,2,\ldots,n $, and $ X_0 $ a convex subset of $ X $. Set
	\[
	W_{ai} = \{ u \in W_0: u(t_i) = x_i, i = 1, 2, \ldots, n, u|_S = 0, u(B) \subset X_0 \}.
	\]
	Then $ A $ is also a multiplier of $ W_{ai} $. Particularly, $ f(t) \in \overline{W_{ai}(t)} $ for all $ t \in \Omega $ if and only if $ f \in \overline{W_{ai}} $.
\end{lem}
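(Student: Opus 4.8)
The plan is to treat the two assertions separately: the multiplier claim is a direct verification, and the localization statement then follows by invoking \autoref{cor:localization} with $W_{ai}$ in place of $W_0$.

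First I would verify that $A$ is a multiplier of $W_{ai}$. Fix $\varphi \in A$ and $u, g \in W_{ai}$, and set $w = \varphi u + (1-\varphi)g$. Since $A$ is a multiplier of $W_0$ and $u, g \in W_0$, we already have $w \in W_0$, so it remains only to check the three defining constraints of $W_{ai}$. At each interpolation node, $w(t_i) = \varphi(t_i) x_i + (1-\varphi(t_i)) x_i = x_i$ because $u(t_i) = g(t_i) = x_i$; on $S$ we have $w|_S = \varphi\, u|_S + (1-\varphi)\, g|_S = 0$; and for $t \in B$, since $\varphi(t) \in [0,1]$ (this is exactly where $A \subset C(\Omega, [0,1])$ is used), the value $w(t) = \varphi(t) u(t) + (1-\varphi(t)) g(t)$ is a convex combination of $u(t), g(t) \in X_0$, hence lies in the convex set $X_0$. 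Thus $w \in W_{ai}$, which establishes the multiplier claim.

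For the equivalence I would apply the localization theorem \autoref{cor:localization} with $W_{ai}$ substituted for $W_0$. Its hypotheses (inherited from \autoref{thm:nonseperate}) are that $A$ is a multiplier of the relevant module, that $A$ is symmetric, and that $S \subset \rho_{W_{ai}}$ for every $(A,v)$-antisymmetric set $S$. The first is precisely what was just proved, the second is assumed, and the third holds automatically: since $A$ is symmetric, every $(A,v)$-antisymmetric set reduces to a single point $\{t\}$, and $\{t\} \subset \rho_{W_{ai}}$ is vacuously true. Note also $W_{ai} \subset W_0 \subset CV_0W$, so $W_{ai}$ is an admissible choice. Hence \autoref{cor:localization} gives $f \in \overline{W_{ai}}$ if and only if $f(t) \in \overline{W_{ai}(t)}$ for every $t \in \Omega$, which is the desired conclusion.

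Since every step is either a one-line check or a citation of the already-established localization result, there is no genuinely hard point here; the only things deserving attention are the convexity of $X_0$, which is exactly what keeps the convex combination inside $X_0$ when verifying the $B$-constraint, and the observation that the symmetry of $A$ trivializes the antisymmetric-set hypothesis of \autoref{thm:nonseperate} for singletons, so that no extra separation assumption on $W_{ai}$ is needed.
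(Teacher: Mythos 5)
Your proposal is correct and follows essentially the same route as the paper: a direct verification that $\varphi u + (1-\varphi)g$ satisfies the interpolation, vanishing, and (via convexity of $X_0$ and $\varphi(\Omega)\subset[0,1]$) range constraints, followed by an application of \autoref{cor:localization} to $W_{ai}$. Your added remark that symmetry of $A$ makes the antisymmetric-set hypothesis of \autoref{thm:nonseperate} vacuous for $W_{ai}$ is a correct and slightly more explicit justification of the citation than the paper gives.
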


\begin{proof}
	For $ u_1, u_2 \in W_{ai} $ and $ \varphi \in A $, let $ u = \varphi u_1 + (1 - \varphi) u_2 $. Then $ u \in W_0 $ and $ u(t_i) = x_i $, $ i = 1,2,\ldots,n $, $ u|_S = 0 $. Due to that $ \varphi(\Omega) \subset [0,1] $ and $ X_0 $ is convex, we also have $ u(B) \subset X_0 $, i.e., $ u \in W_{ai} $. So $ A $ is a multiplier of $ W_{ai} $. Now by \autoref{cor:localization}, $ f(t) \in \overline{W_{ai}(t)} $ for all $ t \in \Omega $ if and only if $ f \in \overline{W_{ai}} $.
\end{proof}

The above lemma has some striking consequences. We list some of them in the following. When $ B = \emptyset $, this case was already discussed in \cite[Section 4]{Pro94} and \cite{PK02}.
The following results (\autoref{thm:ai0} and \autoref{cor:ai1}) generalize \cite[Theorem 16]{Tim05}.

For $ X_0 \subset X $, let $ co(X_0) $ denote the convex hull of $ X_0 $, i.e., $ co(X_0) $ equals the closure of
\[
\left\{ \sum_{i = 1}^{n} s_i x_i: x_i \in X_0, 0 \leq s_i \leq 1, i = 1,2, \ldots, n, \sum_{i = 1}^{n} s_i = 1, n \in \mathbb{N} \right\}.
\]

\begin{thm}\label{thm:ai0}
	Let $ T_0 $ be a finite subset of $ \Omega $.
	\begin{enumerate}[(a)]
		\item \label{aia} If $ \Omega $ is a locally compact Hausdorff space, then for every $ f \in CV_0(\Omega, X) $, $ \varepsilon > 0 $, $ v \in V $ and $ p \in \mathcal{A} $, there is $ u \in C_{c}(\Omega, \mathbb{C}) \otimes X \subset CV_0(\Omega, X) $ such that $ |f - u|_{v, p}< \varepsilon $, $ u|_{T_0} = f|_{T_0} $, $ u(\Omega) \subset co(f(\Omega) \cup \{0\}) $, and $ \supp u \subset \supp f $.
		\item \label{aib} If $ \Omega $ is a completely regular Hausdorff space, then for every $ f \in C_{b}(\Omega, X) $, $ \varepsilon > 0 $, $ v \in C_0(\Omega, \mathbb{R}_+) $ and $ p \in \mathcal{A} $, there is $ u \in C_{b}(\Omega, \mathbb{C}) \otimes X $ such that $ |f - u|_{v,p} < \varepsilon $, $ u|_{T_0} = f|_{T_0} $, $ u(\Omega) \subset co(f(\Omega)) $, and $ \supp u \subset \supp f $.
	\end{enumerate}
\end{thm}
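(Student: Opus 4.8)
The plan is to deduce both parts from the localization \autoref{lem:ai}: I would encode the three side conditions---interpolation on $T_0$, range inside a convex set, and support inside $\supp f$---into the auxiliary family $W_{ai}$, and then verify only the pointwise hypothesis $f(t)\in\overline{W_{ai}(t)}$ by an explicit Urysohn-type construction. For part \eqref{aia} I would take $W=C(\Omega,X)$ (so $CV_0W=CV_0(\Omega,X)$), $A=C_c(\Omega,[0,1])$, $W_0=C_c(\Omega,\mathbb{C})\otimes X$, and set $S=\Omega\setminus\supp f$, $B=\Omega$, $X_0=co(f(\Omega)\cup\{0\})$, $x_i=f(t_i)$. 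Since $\Omega$ is locally compact Hausdorff, $A$ separates points and is therefore symmetric by \autoref{exa:anti}(a); as $C_c(\Omega,\mathbb{C})$ is an algebra and $W_0$ is linear, $A$ is a multiplier of $W_0$; and $W_0(t_i)=X\ni x_i$ because some compactly supported function equals $1$ at $t_i$. With these choices $W_{ai}$ is precisely the set of $u\in C_c(\Omega,\mathbb{C})\otimes X$ with $u|_{T_0}=f|_{T_0}$, $\supp u\subset\supp f$, and $u(\Omega)\subset co(f(\Omega)\cup\{0\})$, so \autoref{lem:ai} reduces the theorem to showing $f(t)\in\overline{W_{ai}(t)}$ for every $t$.

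The heart of the argument is to construct, for each $t$, a single $u\in W_{ai}$ with $u(t)=f(t)$. I would exploit that $f^{-1}(X\setminus\{0\})$ is open and hence contained in $\mathrm{int}\,\supp f$; consequently each of the finitely many points $s\in\{t_1,\dots,t_n,t\}$ at which $f(s)\neq0$ lies in $\mathrm{int}\,\supp f$. Picking pairwise disjoint open neighborhoods of those points (each neighborhood chosen inside $\supp f$) and, by Urysohn's lemma for locally compact Hausdorff spaces, bumps $\varphi_s\in C_c(\Omega,[0,1])$ with $\varphi_s(s)=1$ and $\supp\varphi_s$ inside the chosen neighborhood, the function $u=\sum_s\varphi_s\otimes f(s)$ does everything at once: disjoint supports give $\varphi_s(s')=\delta_{ss'}$ so $u$ interpolates the prescribed values (points with $f=0$ need no bump, and are then evaluated to $0$); $u$ vanishes off $\supp f$; and since $\sum_s\varphi_s\le1$ it takes values in the convex hull of the $f(s)$ together with $0$, a subset of $f(\Omega)\cup\{0\}$, the leftover weight $1-\sum_s\varphi_s$ being absorbed by $0\in X_0$. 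Thus $f(t)\in W_{ai}(t)$, and \autoref{lem:ai} yields $f\in\overline{W_{ai}}$; any $u\in W_{ai}$ with $|f-u|_{v,p}<\varepsilon$ is then the desired approximant.

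Part \eqref{aib} runs along the same lines with $C_c$ replaced by $C_b$: now $CV_0W=C_b(\Omega,X)$ carries the strict topology ($V=C_0(\Omega,\mathbb{R}_+)$, \autoref{exa:spaces}\eqref{exa:strict}), $A=C(\Omega,[0,1])$ is symmetric by complete regularity, $W_0=C_b(\Omega,\mathbb{C})\otimes X$, and $X_0=co(f(\Omega))$. The only genuinely new issue is that $0$ may fail to lie in $co(f(\Omega))$, so the ``leftover weight on $0$'' device can break down. I would handle this by a dichotomy. If $0\in co(f(\Omega))$ the construction above applies verbatim (and the support condition stays consistent, because $\supp f\neq\Omega$ already forces some $f(s)=0$, i.e.\ $0\in f(\Omega)$). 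If instead $0\notin co(f(\Omega))$, then $f$ is nowhere zero, whence $\supp f=\Omega$ and the support constraint is vacuous; in that case I would replace the subordinate bumps by a genuine partition of unity, letting the value at $t$ be carried by the background function $1-\sum_i\varphi_{t_i}$, so that $\sum_s\varphi_s=1$ and $u(\Omega)$ lands in the honest hull $co(f(\Omega))$.

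The main obstacle is the simultaneous enforcement of the support and range conditions. A bump realizing a nonzero value $f(s)$ is positive on an open set, which must be squeezed inside $\supp f$; this is exactly where the openness of $f^{-1}(X\setminus\{0\})$---and hence the inclusion $f^{-1}(X\setminus\{0\})\subset\mathrm{int}\,\supp f$---is indispensable. In part \eqref{aib} the possible absence of $0$ from $co(f(\Omega))$ is the second delicate point, forcing the partition-of-unity alternative. Everything else---that $W_0\subset CV_0W$, that the constructed $u$ actually lies in $W_{ai}$, and the continuity and compact support of the bumps---is routine given local compactness (resp.\ complete regularity) together with Urysohn's lemma.
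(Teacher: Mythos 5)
Your proposal is essentially the paper's own proof: both encode the three side conditions into the family $W_{ai}$ of \autoref{lem:ai} with the same choices ($B=\Omega$, $x_i=f(t_i)$, convex set $co(f(\Omega)\cup\{0\})$ resp.\ $co(f(\Omega))$, and $S$ essentially the complement of $\supp f$ --- taking it open or closed gives the same $W_{ai}$), and both then verify the pointwise hypothesis $f(t)\in W_{ai}(t)$ by exhibiting an explicit element of $W_{ai}$ built from Urysohn-type bumps with pairwise disjoint supports contained in $\supp f$. Your dichotomy in part (b) on whether $0\in co(f(\Omega))$ is a repackaging of the paper's device of adding the background term $\bigl(1-\sum_i\varphi_i(s)\bigr)f(t')$, with $t'\in S$ when $S\neq\emptyset$ (so that the term vanishes and $0\in f(\Omega)$) and $t'$ arbitrary when $S=\emptyset$.

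One detail in your construction needs repair. You place bumps only at the points of $T_0\cup\{t\}$ where $f\neq0$, and then assert that the points of $T_0$ where $f=0$ ``are then evaluated to $0$.'' That is not automatic: such a point $t_j$ may lie inside the (pairwise disjoint, inside $\supp f$) neighborhood attached to some nonzero point $s$, in which case $u(t_j)=\varphi_s(t_j)f(s)$ can be nonzero, the interpolation constraint fails, and your $u\notin W_{ai}$. The fix is one clause: since $\Omega$ is Hausdorff and only finitely many points are involved, shrink the neighborhoods so that they also avoid all the other points of $T_0\cup\{t\}$. The paper sidesteps this automatically by giving \emph{every} point of $T_0\cup\{t\}$ lying outside $S$ its own bump and requiring all the neighborhoods $V_i$ to be pairwise disjoint, which forces $\varphi_i(t_j)=\delta_{ij}$ at every constrained point. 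Finally, note that your appeal to the openness of $f^{-1}(X\setminus\{0\})$ presupposes that $\{0\}$ is closed in $X$ (i.e., $X$ is Hausdorff), which the theorem's hypotheses do not grant; but the paper's proof rests on exactly the same fact (it needs $f$ to vanish on $S=\overline{\Omega\setminus\supp f}$ both for its reduction to $T_0\cap S=\emptyset$ and for the case $t\in S$), so on this point you are on equal footing with the paper.
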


\begin{proof}
	Let $ S = \overline{ \Omega \setminus \supp f } $. Without loss of generality, $ T_0 \cap S \neq 0 $. Let $ B = \Omega $, $ T_{0} = \{ t_i: i = 1,2,\ldots,n \} $ and $ x_i = f(t_i) $. Take $ W_0 = C_{c}(\Omega, \mathbb{C}) \otimes X $ and $ X_0 = co(f(\Omega) \cup \{0\}) $ for the case (a), and $ W_0 = C_{b}(\Omega, \mathbb{C}) \otimes X $ and $ X_0 = co(f(\Omega)) $ for the case (b). For $ A = C(\Omega, [0,1]) $, it is a multiplier of $ W_0 $ and is symmetric.
	Now let $ W_{ai} $ be defined as in \autoref{lem:ai}. We need to show $ f \in \overline{W_{ai}} $ and then it suffices to show $ f(t) \in W_{ai}(t) $ for all $ t $ (by \autoref{lem:ai}). If $ t \in T_0 \cup S $, then this is true. Let $ t \in \Omega \setminus \{T_0 \cup S\} $ and $ t_0 = t $.
	By the assumption on $ \Omega $, there are $ \varphi_i $ and open $ V_i $, $ i = 0, 1, \ldots, n $, such that $ 0 \leq \varphi_i \leq 1 $, $ 0 \leq \sum_{i = 1}^{n} \varphi_i \leq 1 $, and
	\[
	\{ t_i \} \prec \varphi_i \prec V_i, ~ (\cup_{i = 0}^{n} V_i) \cap S = \emptyset, ~V_{i} \cap V_{j} = \emptyset ~(i \neq j),
	\]
	i.e., $ \varphi_i(t_i) = 1 $ and $ \supp \varphi_i \subset V_i $; in addition, for the case (a), $ \overline{V_i} $ is compact. For a proof, see e.g. \cite[Theorem 2.13]{Rud87} for the case (a); the case (b) can be proved similarly. Define
	\begin{enumerate}[(1)]
		\item $ \tilde{u}(s) = \sum_{i = 0}^n \varphi_i(s) f(t_i) $ for the case (a); and
		\item $ \tilde{u}(s) = \sum_{i = 0}^n \varphi_i(s) f(t_i) + ( 1 - \sum_{i = 0}^n \varphi_i(s) ) f(t') $ for the case (b) where $ t' $ is chosen arbitrarily if $ S = \emptyset $, and $ t' \in S $ if $ S \neq \emptyset $.
	\end{enumerate}
	So $ \tilde{u}(\Omega) \subset X_0 $, $ \tilde{u}|_{S} = 0 $ ($ \Leftrightarrow \supp \tilde{u} \subset \supp f $), $ \tilde{u}|_{T_0} = f|_{T_0} $ and $ \tilde{u} \in W_0 $, i.e., $ \tilde{u} \in W_{ai} $ and $ f(t) = \tilde{u}(t) \in W_{ai}(t) $. The proof is complete.
\end{proof}

For two Nachbin families $ V, V' $, we write $ V \leq V' $ if for every $ v \in V $ there is $ v' \in V' $ such that $ v \leq v' $.
\begin{cor}\label{cor:ai1}
	Suppose $ X $ is Hausdorff (but $ \Omega $ is any topological space).
	\begin{enumerate}[(a)]
		\item (See Timofte \cite[Theorem 16]{Tim05}) Let two Nachbin families $ V, V' $ satisfy $ V' \subset C(\Omega, \mathbb{R}_+) $ and $ V \leq V' $. The conclusion \eqref{aia} in \autoref{thm:ai0} also holds if $ f \in C_0(\Omega, X) \cap CV'_0(\Omega, X) $ (in the topology of $ CV_0(\Omega, X) $); particularly, if $ f \in C_0(\Omega, X) $ in the uniform topology.

		\item The conclusion \eqref{aib} in \autoref{thm:ai0} also holds (but without assuming $ \Omega $ is completely regular and Hausdorff).
	\end{enumerate}
\end{cor}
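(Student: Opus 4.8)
The plan is to deduce both parts from \autoref{thm:ai0} by replacing the ambient space $\Omega$ with a quotient space that already has the topological regularity required there, the point being that Hausdorffness of $X$ forces all the relevant data to factor through the quotient map. Concretely, for a suitable $A\subset C(\Omega,\mathbb{C})$ I would form the relation $\rho_A$ and the quotient $\tau\colon\Omega\to\widetilde\Omega=\Omega/\rho_A$. Since $X$ is locally convex and Hausdorff, $X^*$ separates the points of $X$ (as used in \autoref{thm:dis}); hence for every $x^*\in X^*$ the scalar function $x^*\circ f$ lies in $A$, is constant on each class $[x]_A$, and therefore $f$ itself is constant on $[x]_A$ and descends to $\widetilde f$ on $\widetilde\Omega$ with $f=\widetilde f\circ\tau$. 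The finite set $T_0$ descends to the finite set $\tau(T_0)$ (values agree there because $f$ is constant on classes), and the convex hull is unchanged since $\widetilde f(\widetilde\Omega)=f(\Omega)$. Thus it remains to run \autoref{thm:ai0} on $\widetilde\Omega$ and pull the resulting approximant back along $\tau$.

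For part (b) I would take $A=C_b(\Omega,\mathbb{C})$. The canonical map into the Tychonoff cube $\prod_{\varphi\in C_b(\Omega,[0,1])}[0,1]$ shows that $\widetilde\Omega=\Omega/\rho_{C_b}$ is completely regular and Hausdorff, and $\tau^*$ identifies $C_b(\widetilde\Omega,\mathbb{C})$ with $C_b(\Omega,\mathbb{C})$, so $C_b(\widetilde\Omega,\mathbb{C})\otimes X$ pulls back onto $C_b(\Omega,\mathbb{C})\otimes X$. Here $f\in C_b(\Omega,X)$ and the weight $v\in C_0(\Omega,\mathbb{R}_+)\subset C_b$ are both constant on $\rho_{C_b}$-classes, so they descend to $\widetilde f\in C_b(\widetilde\Omega,X)$ and $\widetilde v\in C_0(\widetilde\Omega,\mathbb{R}_+)$ with $|\widetilde f-\widetilde u|_{\widetilde v,p}=|f-\widetilde u\circ\tau|_{v,p}$ for any $\widetilde u$. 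Applying \autoref{thm:ai0}\eqref{aib} on $\widetilde\Omega$ and setting $u=\widetilde u\circ\tau$ then yields the required $u\in C_b(\Omega,\mathbb{C})\otimes X$, and the interpolation and norm conditions transfer immediately.

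For part (a) I would take $A=C_0(\Omega,\mathbb{C})$. The defining property of $C_0$, namely that each super-level set $\{\,\varphi\ge\delta\,\}$ is compact, makes the separated quotient $\widetilde\Omega$ locally compact Hausdorff and, more importantly, makes $\tau$ \emph{proper}: if $\widetilde K\subset\widetilde\Omega$ is compact it lies in some $\{\widetilde\varphi\ge\delta\}$, whence $\tau^{-1}(\widetilde K)$ is a closed subset of the compact set $\{\varphi\ge\delta\}$ with $\varphi=\widetilde\varphi\circ\tau$. Properness is exactly what guarantees that $\widetilde u\in C_{c}(\widetilde\Omega,\mathbb{C})\otimes X$ pulls back to $u=\widetilde u\circ\tau\in C_{c}(\Omega,\mathbb{C})\otimes X$. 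On the locus where all of $C_0$ degenerates, $f$ vanishes (again by Hausdorffness of $X$, since then $x^*\circ f$ vanishes there for every $x^*$), so this locus collapses to the ``point at infinity'' of $\widetilde\Omega$ and is harmless. The hypotheses $V\le V'$ and $V'\subset C(\Omega,\mathbb{R}_+)$ enter twice: they ensure $f\in CV_0(\Omega,X)$ (each set $\{\,v\,p(f)\ge\epsilon\,\}$ is a closed subset of the compact $\{\,v'\,p(f)\ge\epsilon\,\}$), and they let me replace the merely upper semicontinuous $v$ by a continuous majorant $v'$ before transporting the weight to $\widetilde\Omega$. One then applies \autoref{thm:ai0}\eqref{aia} on $\widetilde\Omega$ and pulls back.

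The routine parts above are the factorizations of $f$, $T_0$ and the convex hull. The genuine obstacle is the faithful transfer of the two geometric side conditions across $\tau$: the range condition $u(\Omega)\subset co(f(\Omega)\cup\{0\})$ is automatic once $\widetilde u(\widetilde\Omega)\subset co(\widetilde f(\widetilde\Omega)\cup\{0\})$, but the support condition $\supp u\subset\supp f$ amounts to comparing $\overline{\tau^{-1}(B)}$ with $\tau^{-1}(\overline B)$, i.e. an openness-type property of $\tau$, and in part (a) the weight must be pushed forward as a (still upper semicontinuous) function on $\widetilde\Omega$ without losing $|f-u|_{v,p}\le|\widetilde f-\widetilde u|_{\widetilde v',p}$. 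I expect to handle the support inclusion by tracking the explicit partition-of-unity construction of $\widetilde u$ in the proof of \autoref{thm:ai0} — each bump $\widetilde\varphi_i$ is supported off $\overline{\widetilde\Omega\setminus\supp\widetilde f}$, so its pullback is supported inside $\supp f$ — rather than by manipulating closures abstractly.
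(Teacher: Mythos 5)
Your part (b) is essentially the paper's own argument: the paper also passes to the completely regular Hausdorff reflection (quoting Gillman--Jerison for the $C(\Omega,\mathbb{R})$-quotient, which coincides with your $C_b$-quotient since $\rho_{C_b(\Omega,\mathbb{R})}=\rho_{C(\Omega,\mathbb{R})}$ and the two weak topologies agree), descends $f$, $v$, $T_0$ using the Hausdorffness of $X$, applies \autoref{thm:ai0} \eqref{aib} upstairs, and pulls back. That half of your proposal is sound and is not a different route.

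Part (a), however, has a genuine gap, and it is located exactly where you flag it but do not resolve it: your quotient is by $A=C_0(\Omega,\mathbb{C})$, and the weights $v'\in V'\subset C(\Omega,\mathbb{R}_+)$ are \emph{not} constant on $\rho_{C_0}$-classes (the $C_0$-classes are coarser than the $C$-classes; think of any $\Omega$ with few $C_0$-functions), so $v'$ does not descend to $\widetilde\Omega$ at all. The only candidate, the fibrewise supremum $\widetilde v'(\omega)=\sup_{s\in\tau^{-1}\omega}v'(s)$, need not be finite on classes where $f$ vanishes, and its upper semicontinuity requires a closedness property of $\tau$ that you have not established; without a legitimate Nachbin family on $\widetilde\Omega$ for which $\widetilde f\in C\widetilde V_0(\widetilde\Omega,X)$, \autoref{thm:ai0} \eqref{aia} cannot even be invoked upstairs. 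The support condition has the same defect: from $\supp\widetilde u\subset\supp\widetilde f$ you only get $\supp u\subset\tau^{-1}(\supp\widetilde f)$, which can be strictly larger than $\supp f$, and your proposed fix via the bumps needs $\tau(\Omega\setminus\supp f)\subset\overline{\widetilde\Omega\setminus\supp\widetilde f}$, which fails in general because a neighborhood on which $f$ vanishes need not be $C_0$-saturated. The paper dissolves both problems with one device that your proposal lacks: first restrict to the cozero set $\Omega_0=f^{-1}(X\setminus\{0\})$ (an open set, since $X$ is Hausdorff), and quotient $\Omega_0$ by \emph{all} of $C(\Omega_0,\mathbb{R})$, not by $C_0$. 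Then every continuous $v'\in V'$ descends exactly (this is precisely what the hypothesis $V'\subset C(\Omega,\mathbb{R}_+)$, $V\le V'$ is for), local compactness and properness of $\pi$ come not from the algebra but from $f\in C_0(\Omega,X)$ itself (the sets $\{t:|p_0(f(t_0))-p_0(f(t))|\le p_0(f(t_0))/2\}$ are compact), and $\supp u\subset\supp f$ is automatic because $u$ is built on $\Omega_0$ and extended by zero, so $\{u\neq 0\}\subset\Omega_0$ and $\supp u\subset\overline{\Omega_0}=\supp f$. Without this restriction to $\Omega_0$ and the change of algebra, your part (a) does not go through.
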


\begin{proof}
	We will use the following fact due to Stone and \^{C}ech (see e.g. \cite[Theorem 3.9]{GJ60}).
	\begin{enumerate}[$ \bullet $]
		\item For any topological space $ \mathcal{T} $, there exist a completely regular Hausdorff space $ \mathfrak{T} $ and a continuous map $ \pi $ of $ \mathcal{T} $ onto $ \mathfrak{T} $, such that the map $ g \mapsto g \circ \pi $ is an isomorphism of $ C(\mathfrak{T}, \mathbb{R}) $ onto $ C(\mathcal{T}, \mathbb{R}) $.
	\end{enumerate}
	In fact, the space is given by $ \mathfrak{T} = \{ [x]_{C(\mathcal{T}, \mathbb{R})}: x \in \mathcal{T} \} $ and the map $ \pi $ is constructed by $ x \mapsto [x]_{C(\mathcal{T}, \mathbb{R})} $. The topology of $ \mathfrak{T} $ is the weakest one such that all $ h: \mathfrak{T} \to \mathbb{R} $ satisfying $ h \circ \pi \in C(\mathcal{T}, \mathbb{R}) $ are continuous. So for $ h \in C(\mathfrak{T}, \mathbb{R}) $ and $ \varepsilon > 0 $, the set
	\[
	\{ y: |h(y) - h(y_0)| < \varepsilon \}
	\]
	is a neighborhood of $ y_0 \in \mathfrak{T} $. In the following, we will write $ \mathfrak{T} = \widehat{\mathcal{T}} $ if $ \mathfrak{T} $ is constructed through this way.

	(a)
	Note that since $ f \in C_0(\Omega, X) $ and $ X $ is Hausdorff, for every open neighborhood $ \mathcal{N} $ of $ 0 $, the closure of $ f^{-1}(X \setminus \mathcal{N}) $ is compact. Particularly, for $ \Omega_0 \triangleq f^{-1}(X \setminus \{0\}) $ and its corresponding space $ \widehat{\Omega}_0 $, they are locally compact. We show this is true for $ \widehat{\Omega}_0 $ as follows. Let $ \widehat{f} \in C(\widehat{\Omega}_0, X) $ such that $ f = \widehat{f} \circ \pi $. Take $ [t_0] \in \widehat{\Omega}_0 $ and $ p_0 \in \mathcal{A} $ such that $ p_0(\widehat{f}([t_0])) \neq 0 $. Then $ \widehat{V}_0 \triangleq \{ [t]: |p_0(\widehat{f}([t_0])) - p_0(\widehat{f}([t])) | \leq \varepsilon_0 \} $ is a (closed) neighborhood of $ [t_0] $ as $ p_0\circ f = p_0 \circ \widehat{f} \circ \pi \in C(\Omega_0, \mathbb{R}) $ where $ \varepsilon_0 = p_0(\widehat{f}([t_0])) / 2 $. Note that $ \pi^{-1} \widehat{V}_0 = \{ t: |p_0(f(t_0)) - p_0(f(t)) | \leq \varepsilon_0 \} $ which is compact as $ f \in C_0(\Omega, X) $. It yields that $ \widehat{V}_0 $ is compact. The above argument also shows that $ \widehat{f} \in C_0(\widehat{\Omega}_0, X) $ and $ \pi: \Omega_0 \to \widehat{\Omega}_0 $ is proper.

	Set $ \widehat{V}^+ = \{ \widehat{v}' \in C(\widehat{\Omega}_0, \mathbb{R}_+): \widehat{v}' \circ \pi \in V'|_{\Omega_0} \} $. Note that $ C_c(\widehat{\Omega}_0, X) \subset C\widehat{V}^+_0(\widehat{\Omega}_0, X) $ and $ CV'_0(\Omega_0, X) \subset CV_0(\Omega_0, X) $.
	Without loss of generality, suppose $ T_0 \subset \supp f $. Let $ \widehat{T}_0 = \pi(T_0) $, and $ \varepsilon > 0 $, $ v \in V $, $ p \in \mathcal{A} $. Choose $ v' \in V' $ and $ \widehat{v}' \in \widehat{V}^+ $ such that $ v \leq v' $ and $ \widehat{v}' \circ \pi = v' $.
	Since $ \widehat{f} \in C\widehat{V}^+_0(\widehat{\Omega}_0, X) $ (due to $ f \in CV'_0(\Omega, X) $ and $ \pi $ being proper), by \autoref{thm:ai0} \eqref{aia}, there is $ \widehat{u} \in C_{c}(\widehat{\Omega}_0, \mathbb{C}) \otimes X \subset C\widehat{V}^+_0(\widehat{\Omega}_0, X) $ such that $ |\widehat{f} - \widehat{u}|_{\widehat{v}', p}< \varepsilon $, $ \widehat{u}|_{\widehat{T}_0} = \widehat{f}|_{\widehat{T}_0} $, $ \widehat{u}(\widehat{\Omega}_0) \subset co(\widehat{f}(\widehat{\Omega}_0) \cup \{0\}) $, and $ \supp \widehat{u} \subset \supp \widehat{f} $.

	Define $ u = \widehat{u} \circ \pi \in C(\Omega_0, \mathbb{C}) \otimes X $. Then $ u|_{T_0} = f|_{T_0} $, $ u(\Omega_0) \subset co(f(\Omega_0) \cup \{0\}) $, and $ \supp u \subset \supp f $. Moreover, one can easily see that $ \supp u $ is compact (as $ \supp \widehat{u} $ is compact and $ \pi $ is proper), i.e., $ u \in C_c(\Omega_0, \mathbb{C}) \otimes X $; in addition, since $ \Omega_0 $ is open in $ \Omega $, we can assume $ u \in C_c(\Omega, \mathbb{C}) \otimes X $.
	Now we have
	\[
	|f - u|_{v, p} \leq |f - u|_{v', p} = |f - u|_{v', p, \Omega_0} = |\widehat{f} - \widehat{u}|_{\widehat{v}', p} < \varepsilon.
	\]

	(b) Note that for $ y \in [x]_{C(\Omega, \mathbb{R})} $, we have $ h(x) = h(y) $ where $ h \in C(\Omega, X) $ as $ X $ is Hausdorff (see also the proof of \autoref{thm:dis}).
	So for $ f \in C_b(\Omega, X) $, we have a unique $ \widehat{f} \in C_{b}(\widehat{\Omega}, X) $ such that $ f = \widehat{f} \circ \pi $; similarly for $ v \in C_0(\Omega, \mathbb{R}_+) $, there is $ \widehat{v} \in C_0(\widehat{\Omega}, \mathbb{R}_+) $ such that $ \widehat{v} \circ \pi = v $ (see the proof in (a)). As $ \widehat{\Omega} $ is a completely regular Hausdorff space, by \autoref{thm:ai0} \eqref{aib}, we also have $ \widehat{u} \in C_{b}(\widehat{\Omega}, \mathbb{C}) \otimes X $ such that $ |\widehat{f} - \widehat{u}|_{\widehat{v}, p} < \varepsilon $, $ \widehat{u}|_{\pi(T_0)} = \widehat{f}|_{\pi(T_0)} $, $ \widehat{u}(\widehat{\Omega}) \subset co(\widehat{f}(\widehat{\Omega})) $, and $ \supp \widehat{u} \subset \supp \widehat{f} $. Let $ u = \widehat{u} \circ \pi $ which is the desired function. The proof is complete.
\end{proof}

In the following, we focus on a concrete problem, i.e., to determine the closure of an operator's domain, which often occurs in the operator semigroup theory, e.g., in the study of delay equations by using operator semigroup theory (see e.g. \cite{EA06, EN00} and the references therein); the following is such an example.

\begin{exa}\label{exa:delay0}
	Let $ A : D(A) \subset X \to X $ be a linear (unbounded) operator where $ D(A) $ is its domain, $ X_0 = \overline{D(A)} $ and
	\[
	\mathcal{C}_{A} = \{ \varphi \in C([-1, 0], X): \varphi(0) \in X_0 \}.
	\]
	Consider
	\begin{gather*}
	D_{1,0} = \{ \varphi \in C^{1}([-1, 0], X): \varphi(0) \in D(A), \varphi'(0) = A\varphi(0) \}.
	\end{gather*}
	We will show $ \overline{D_{1,0}} = \mathcal{C}_{A} $ below.
\end{exa}

The following result about polynomial seems well known.
\begin{lem}\label{lem:inter}
	Given $a_{ij}, x_i \in \mathbb{C}$, there is a polynomial $p(x)$ satisfying $p^{(j)}(x_i) = a_{ij}$, where $x_i$ are different from each other, $i = 1,2,\ldots,n,~j=0,1,\ldots,m$.
\end{lem}
\begin{proof}
	A quick proof can be as follows. For $ m = 0 $, this is well known. Suppose this lemma holds for $ m = s - 1 $, i.e., there is a polynomial $p$ such that $p^{(j)}(x_i) = a_{ij},~ j=0,1,\ldots,s-1$. Let us consider the case $ m = s $.
	Take the polynomial
	\[
	p_1(x) = \prod\limits_{i=1}^{n}(x - x_i)^s;
	\]
	note that $p_1^{(s)}(x_i) \neq 0, p^{(k)}_1(x_i) = 0$, $k = 0,1,\ldots,s-1$, $ i = 1,2,\ldots n $. Set
	\[
	f(x) = p(x) + b(x)p_1(x),
	\]
	where $b(x)$ is a polynomial which will be chosen later. One gets
	\[
	f^{(j)}(x_i) = p^{(j)}(x_i) + (bp_1)^{(j)}(x_i) = p^{(j)}(x_i) = a_{ij},~ j = 0,1,\ldots,s-1,
	\]
	and
	\[
	f^{(s)}(x_i) = p^{(s)}(x_i) + b(x_i)p_1^{(s)}(x_i).
	\]
	Thus, it suffices to take $b(x)$ such that $b(x_i) = \frac{a_{is} - p^{(s)}(x_i)}{p_1^{(s)}(x_i)}$, giving the desired polynomial such that $f^{(s)}(x_i) = a_{is}$. The proof is complete.
\end{proof}

\begin{rmk}
	The same argument in fact yields the following result.
	\begin{enumerate}[$ \bullet $]
		\item Given different $ n $ points $ x_i $, $ i = 1,2,\ldots,n $, and $ m_{ij_i}(\lambda) \in \mathbb{C} $, then there is a unique polynomial $ P_{\lambda} $ with degree less that $ j_1 + j_2 + \cdots + j_n + n - 1 $ such that
		\[
		P_{\lambda}(x_i) = m_{i0}(\lambda), P'_{\lambda}(x_i) = m_{i1}(\lambda), \cdots, P^{(j_i)}_{\lambda}(x_i) = m_{ij_i}(\lambda), ~ i = 1,2,\ldots,n.
		\]
		Moreover, if all $ \lambda \mapsto m_{ij_i}(\lambda) $ are $ C^{k} $ and $ P_{\lambda}(t) = \sum a_{j}(\lambda) t^{j} $, then all $ \lambda \mapsto a_{j}(\lambda) $ are $ C^{k} $.
	\end{enumerate}
\end{rmk}

\begin{lem}\label{lem:closure}
	Let $ \Omega $ be a completely regular Hausdorff space, $ T_i $ a finite subset of $ \Omega $, $S_{j} \subset \Omega $ such that $ \overline{S_{j}} $ is compact, $ X_j $ a convex subset of $ X $, $ B_i \subset X $, $ j = 1,2, \ldots,n $, $ i = 1,2,\ldots,m $; in addition $ \overline{ S_{i_1} } \cap \overline{ S_{i_2} } = \emptyset $ ($ i_1 \neq i_2 $). Set
	\[
	\mathcal{K}_{T_{i},B_{i}; S_{j}, X_j} \triangleq \{ g \in C_{b}(\Omega, E): g(T_{i}) \subset B_i, ~1 \leq i\leq m,~ g(S_{j}) \subset X_{j}, ~1\leq j \leq n \}.
	\]
	Then $\overline{\mathcal{K}_{T_{i},B_{i}; S_{j}, X_j}} = \mathcal{K}_{{T_{i}},\overline{B_{i}}; \overline{S_{j}}, \overline{X_j}}$ (in the uniform topology).
\end{lem}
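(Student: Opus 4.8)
Write $\mathcal{K} = \mathcal{K}_{T_i,B_i;S_j,X_j}$ and $\mathcal{K}' = \mathcal{K}_{T_i,\overline{B_i};\overline{S_j},\overline{X_j}}$ (taking $E = X$), and plan to prove the two inclusions $\overline{\mathcal{K}} \subset \mathcal{K}'$ and $\mathcal{K}' \subset \overline{\mathcal{K}}$ separately. The first is the routine half and needs no compactness: if $g_k \to g$ uniformly with $g_k \in \mathcal{K}$, then at each of the finitely many points $t \in T_i$ one has $g_k(t) \in B_i \to g(t)$, so $g(t) \in \overline{B_i}$; and $g_k(S_j) \subset X_j$ forces $g(S_j) \subset \overline{X_j}$ by pointwise convergence, whence $g(\overline{S_j}) \subset \overline{X_j}$ since $g$ is continuous and $\overline{X_j}$ is closed. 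Thus $\mathcal{K}'$ is closed and contains $\overline{\mathcal{K}}$.

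For $\mathcal{K}' \subset \overline{\mathcal{K}}$ I would fix $g \in \mathcal{K}'$, $p \in \mathcal{A}$ and $\varepsilon > 0$, and build $u \in \mathcal{K}$ with $\sup_{\Omega} p(g - u) < \varepsilon$; the construction follows the partition-of-unity scheme already used for \autoref{thm:ai0}\eqref{aib}, now localized on the finitely many disjoint compact pieces $\overline{S_j}$. Since the $\overline{S_j}$ are pairwise disjoint compacta in the completely regular Hausdorff space $\Omega$ and $T := \bigcup_i T_i$ is finite, I first fix pairwise disjoint open sets $U_j \supset \overline{S_j}$ together with Urysohn functions $\chi_j \in C(\Omega,[0,1])$ with $\chi_j = 1$ on $\overline{S_j}$ and $\supp \chi_j \subset U_j$; by continuity of $g$ on the compact $\overline{S_j}$ these neighbourhoods may be shrunk so that $g$ varies little across each $U_j$ relative to $g(\overline{S_j})$. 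Note that each point of $T$ lies in at most one $\overline{S_j}$.

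Next I treat the constraints piece by piece. At each $t \in T$ I choose a target $y_t$ lying in $\bigcap_{i:\,t\in T_i} B_i$ (and, if $t$ belongs to some $\overline{S_j}$, also in $X_j$) with $p(g(t) - y_t) < \varepsilon$, which is possible because $g(t)$ lies in the closure of the relevant set. On each compact $\overline{S_j}$, using $g(\overline{S_j}) \subset \overline{X_j}$ and the convexity of $X_j$, I cover $g(\overline{S_j})$ by finitely many $p$-balls of radius $<\varepsilon$, pick approximants of their centres inside $X_j$, and form via a partition of unity on $\overline{S_j}$ a function $v_j$ whose values are convex combinations of these approximants — hence lie in $X_j$ — with $p(g - v_j) < \varepsilon$ on $\overline{S_j}$ and $v_j(t) = y_t$ at the points $t \in T \cap \overline{S_j}$. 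Writing $v_j = \sum_\ell \lambda_\ell \otimes x_\ell$ with scalar weights, I extend the scalars by complete regularity to $\tilde v_j \in C_b(\Omega,\mathbb{C})\otimes X$ supported in $U_j$, and finally glue, setting
\[
u = g + \sum_j \chi_j\,(\tilde v_j - g) + \sum_{t \in T \setminus \bigcup_j \overline{S_j}} \psi_t\,(y_t - g),
\]
where the $\psi_t$ are Urysohn bumps at the remaining points of $T$, supported in small neighbourhoods disjoint from the $U_j$ and from one another. Then $u \in C_b(\Omega,X)$ equals $v_j \subset X_j$ on each $S_j$, equals $y_t \in B_i$ at each $t \in T_i$, and, because the correction terms have disjoint supports, $u \in \mathcal{K}$ with $\sup_{\Omega} p(g - u)$ of order $\varepsilon$.

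The main obstacle is this last gluing step: one must verify that the blended function stays uniformly close to $g$ on the transition regions $U_j \setminus \overline{S_j}$, where $\tilde v_j$ no longer tracks $g$ exactly — here the shrinking of $U_j$ via continuity of $g$ on the compact $\overline{S_j}$ is essential, and the estimate is the vector-valued analogue of the one carried out in the proof of \autoref{thm:ai0}. A secondary point to check when filling in the details is the compatibility of the constraints at a point $t$ lying in several $T_i$ or in some $T_i \cap \overline{S_j}$: the choice of $y_t$ requires $g(t)$ to lie in the closure of the corresponding intersection, which holds in particular when the $T_i$ are pairwise disjoint, as in the application of \autoref{exa:delay0}.
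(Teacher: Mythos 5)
Your first inclusion is fine, and your overall toolkit (compactness of $\overline{S_j}$, Urysohn functions from complete regularity, convex combinations of approximants chosen in $X_j$, separate bumps at the points of $T$) is the same as the paper's. But there is a genuine gap at exactly the step you call ``the main obstacle,'' and the remedy you propose does not close it. Your correction term on $U_j$ is $\chi_j\,(\tilde v_j - g)$, where $\tilde v_j = \sum_\ell \tilde\lambda_\ell \otimes x_\ell$ and the $\tilde\lambda_\ell$ are extensions of the partition weights obtained abstractly from complete regularity. Such extensions carry no quantitative information off $\overline{S_j}$: at a point $y \in \supp \chi_j \setminus \overline{S_j}$ the numbers $\tilde\lambda_\ell(y)$ need not sum to $1$ (they may all vanish), and even when they do sum to $1$ they need not be supported where $g$ is near the corresponding centres. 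Concretely, if $\chi_j(y)=1$ while all $\tilde\lambda_\ell(y)=0$, then $u(y)=\tilde v_j(y)=0$ and $p(u(y)-g(y))=p(g(y))$, which is not small no matter how much you shrink $U_j$: shrinking $U_j$ controls the oscillation of $g$, i.e.\ the distance from $g(y)$ to $g(\overline{S_j})$, but says nothing about the values of the extended scalars, which is where the error lives. (Also, the estimate you invoke from \autoref{thm:ai0} does not exist there: in that proof the constructed function only has to hit $f(t)$ at one point and lie in $W_{ai}$; the uniform approximation is delegated to \autoref{lem:ai}, i.e.\ to Machado's theorem.)

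The gap can be repaired, but only by changing the construction. One fix is to choose the cutoff \emph{after} the extension: $N_j=\{y\in U_j:\ p(\tilde v_j(y)-g(y))<2\varepsilon\}$ is open and contains $\overline{S_j}$, so you may take $\chi_j$ with $\chi_j=1$ on $\overline{S_j}$ and $\supp\chi_j\subset N_j$; with that order of quantifiers your gluing formula works. The paper instead avoids any extend-and-cut-off step: it covers $\overline{S_1}$ by the open sets $V_{\tilde s_i}(\varepsilon/2)=\{y:\ p(g(\tilde s_i)-g(y))<\varepsilon/2\}$, builds globally defined Urysohn functions $h_i$ subordinate to these sets, forms the telescoping weights $\varphi_i=(1-h_1)\cdots(1-h_{i-1})h_i$, and defines in one stroke
\[
g_{\varepsilon,p}=\sum_i \varphi_i\, b_i+\Bigl(1-\sum_i\varphi_i\Bigr)\,g ,
\]
attaching the leftover weight to $g$ itself rather than to $0$. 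Since each $\varphi_i$ is supported where $p(g-g(s_i))\le 2\varepsilon$ and $p(b_i-g(s_i))<\varepsilon$, every value of $g_{\varepsilon,p}$ is a convex combination of $g(t)$ and nearby targets $b_i$, so the uniform bound $p(g_{\varepsilon,p}-g)\le 3\varepsilon$ is immediate and no transition region ever needs separate treatment. Your closing caveat about points of $T_i\cap\overline{S_j}$ is consistent with the paper, which simply restricts (``without loss of generality'') to $T_1\cap\overline{S_1}=\emptyset$.
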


\begin{proof}
	Following the notation suggested in \cite{Rud87}, for $ h \in C(\Omega, [0,1]) $, let $ M \prec h \prec N $ denote $ h|_{M} = 1 $ and $ \supp h \subset N $, where $ M \subset N \subset \Omega $.

	For simplicity, we only consider $ n = m = 1 $. Without loss of generality, we can assume $ T_1 \cap \overline{S_1} = \emptyset $. Write $ T_1 = \{s_i: i = 1,2,\ldots,k\} $. Clearly, $ \overline{\mathcal{K}_{T_{1},B_{1}; S_{1}, X_1}} \subset \mathcal{K}_{{T_{1}},\overline{B_{1}}; \overline{S_{1}}, \overline{X_1}} $. Let us show $ \mathcal{K}_{{T_{1}},\overline{B_{1}}; \overline{S_{1}}, \overline{X_1}} \subset \overline{\mathcal{K}_{T_{1},B_{1}; S_{1}, X_1}} $. Take $ g \in \mathcal{K}_{{T_{1}},\overline{B_{1}}; \overline{S_{1}}, \overline{X_1}} $. Let $ \varepsilon > 0 $, $ p \in \mathcal{A} $, and define
	\[
	V_s(\varepsilon) \triangleq \{ y \in \Omega:  p(g(s) - g(y))  < \varepsilon \}, ~ s \in \Omega.
	\]
	Since $ \overline{ S_1 } $ is compact, there are $\tilde{s}_i \in \overline{S_1}$ ($k+1 \leq i \leq k+k_1$) such that $ \overline{ S_1 } \subset \bigcup_{i=k+1}^{k+k_1}V_{\tilde{s}_i} (\varepsilon / 2) $. Take $ s_i \in V_{\tilde{s}_i} (\varepsilon) \cap S_1 \neq \emptyset $ (as $\tilde{s}_i \in \overline{S_1}$). Then
	\[
	p(g(s_i) - g(y)) \leq 2\varepsilon, ~\forall y \in V_{\tilde{s}_i} (\varepsilon).
	\]
	Choose open sets $ \widetilde{V}_{i} $ such that $ s_i \in \widetilde{V}_{i} \subset V_{s_i} (\varepsilon) $ ($ 1 \leq i \leq k $) $ (\bigcup_{i = 1}^{k} \widetilde{V}_{i}) \cap \overline{S_1} = \emptyset $ and $ \widetilde{V}_{i} \cap \widetilde{V}_{j} = \emptyset $ ($ i \neq j $). As $ g(s_i) \in \overline{X_1} $, $ i \geq k+1 $, there are $ b_i \in X_{1} $ such that $ p(g(s_i) - b_i) < \varepsilon $; similarly, for $ 1 \leq i \leq k $, there are $ b_i \in B_{1} $ such that $ p(g(s_i) - b_i) < \varepsilon $. Let $ K_i = \overline{ S_1 } \cap \overline{V_{\tilde{s}_i} (\varepsilon / 2)} $, $ i \geq k+1 $. Then $ K_i $ is compact and $ K_i \subset V_{\tilde{s}_i} (\varepsilon) $. Since $ \Omega $ is completely regular and Hausdorff, we have $ h_i \in C(\Omega, [0,1]) $ such that
	\[
	\{ s_i \} \prec h_i \prec \widetilde{V}_i ~ ( 1 \leq i \leq k ), ~K_i \prec h_i \prec V_{\tilde{s}_i} (\varepsilon)~ (k+1 \leq i \leq k+k_1).
	\]
	Write
	\[
	\varphi_1 = h_1, ~\varphi_2 = (1 - h_1)h_2,~ \cdots, ~\varphi_i = (1 - h_1)(1 - h_2)\cdots(1 - h_{i - 1})h_i, ~ 1 \leq i \leq k+k_1.
	\]
	Then $ \varphi_{i}(s_i) = 1 $, $ \supp \varphi_{i} \subset \widetilde{V}_{i} $ ($ 1 \leq i \leq k $), and $ \sum_{i = k+1}^{k+k_1} \varphi_{i}(t) = 1 $ if $ t \in S_1 $; in addition, $ 0 \leq \sum_{i = 1}^{k+k_1} \varphi_{i} \leq 1 $. That is, $ \{ \varphi_{i} \} $ is a unit partition of $ T_1 \cup S_1 $. Define
	\[
	g_{\varepsilon, p}(t) = \sum_{i = 1}^{k+k_1} \varphi_{i}(t) b_i + (1 - \sum_{i = 1}^{k+k_1} \varphi_{i}(t)) g(t).
	\]
	We have $ g_{\varepsilon, p}(s_i) = b_i \in B_1 $ ($ 1 \leq i \leq k $), and for $ t \in S_1 $, $ g_{\varepsilon, p}(t) = \sum_{i = k+1}^{k+k_1} \varphi_{i}(t) b_i \in X_1 $ (as $ X_1 $ is convex), that is, $ g_{\varepsilon, p} \in \mathcal{K}_{T_{1},B_{1}; S_{1}, X_1} $. Moreover,
	\begin{align*}
	p(g_{\varepsilon, p}(t) - g(t)) & \leq \sum_{i=1}^{k+k_1}\varphi_i(t) p (b_i - g(t)) \\
	& =  \sum_{i=1}^{k+k_1} \varphi_i(t) p(b_i - g({s_i}) + g({s_i}) - g(t)) \\
	& \leq  3\varepsilon.
	\end{align*}
	Therefore $ g \in \overline{\mathcal{K}_{T_{1},B_{1}; S_{1}, X_1}} $, and this completes the proof.
\end{proof}

\begin{lem}\label{lem:appP}
	Let $ - \infty < a \leq t_i \leq b < \infty $, $ B_{ij} \subset X $, $ i = 0, 1,2,\ldots,n $, $ j = 0, 1,2,\ldots,m $, and
	\begin{gather*}
	\mathcal{P}_{0, X} = \{ f|_{[a, b]}: f \in \mathbb{C}[z] \otimes X, f^{(j)}(t_i) \in B_{ij}, 0 \leq i \leq n, ~0 \leq j \leq m \},\\
	\mathcal{P}_{1, X} = \{ f|_{[a, b]}: f \in \mathbb{C}[z] \otimes X, f^{(j)}(t_i) \in B_{ij}, 0 \leq i \leq n, ~1 \leq j \leq m \},\\
	\mathcal{K}_{0, X} = \{ f \in C([a, b], X): f(t_i) \in \overline{ B_{ij} }, 0 \leq i \leq n \}.
	\end{gather*}
	Then $ \overline{\mathcal{P}_{0,X}} = \mathcal{K}_{0, X} $ and $ \overline{\mathcal{P}_{1, X}} = C([a, b], X) $ (in the uniform topology).
\end{lem}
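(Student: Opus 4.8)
The plan is to prove both equalities by the same mechanism: for the uniform topology, constraints on derivatives of order $\ge 1$ are \emph{invisible to the closure}, while a constraint on a value $f(t_i)$ survives only through its closure $\overline{B_{i0}}$. Throughout I assume every $B_{ij}$ is nonempty (otherwise $\mathcal{P}_{0,X},\mathcal{P}_{1,X}$ would be empty, making the statement fail), and I read $\mathcal{K}_{0,X}=\{f\in C([a,b],X): f(t_i)\in\overline{B_{i0}},\ 0\le i\le n\}$, the only value constraint available. The inclusions $\overline{\mathcal{P}_{0,X}}\subset\mathcal{K}_{0,X}$ and $\overline{\mathcal{P}_{1,X}}\subset C([a,b],X)$ are immediate: evaluation $f\mapsto f(t_i)$ is uniformly continuous, so $g_k\to f$ with $g_k(t_i)\in B_{i0}$ forces $f(t_i)\in\overline{B_{i0}}$, and the derivative data $B_{ij}$ ($j\ge1$) impose nothing on a uniform limit.

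For the reverse inclusions I fix a seminorm $p\in\mathcal{A}$ and $\varepsilon>0$ (it suffices to $p$-approximate within $\varepsilon$) and use a three-step correction of the target $f$ (any $f\in C([a,b],X)$ in the $\mathcal{P}_{1,X}$ case, any $f\in\mathcal{K}_{0,X}$ in the $\mathcal{P}_{0,X}$ case). First I would use the Grothendieck--Stone--Weierstrass theorem to pick $g_0\in\mathbb{C}[z]\otimes X$ with $\sup_{[a,b]}p(f-g_0)<\varepsilon/3$. Second, a \emph{value correction} (needed only in the $\mathcal{P}_0$ case): choose $b_{i0}\in B_{i0}$ with $p(f(t_i)-b_{i0})$ small, so that $c_{i0}:=b_{i0}-g_0(t_i)$ is small, and take the vector Hermite interpolant $h_0$ furnished by \autoref{lem:inter} with $h_0(t_i)=c_{i0}$ and all higher derivatives $0$ at the $t_i$; since $h_0$ depends linearly on its (small) data, $\sup_{[a,b]}p(h_0)<\varepsilon/3$. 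Third, a \emph{derivative correction}: with $g_1=g_0+h_0$, choose $b_{ij}\in B_{ij}$ arbitrarily, put $c_{ij}:=b_{ij}-g_1^{(j)}(t_i)$ for $1\le j\le m$, and produce $h_1\in\mathbb{C}[z]\otimes X$ with $h_1(t_i)=0$, $h_1^{(j)}(t_i)=c_{ij}$ and $\sup_{[a,b]}p(h_1)<\varepsilon/3$. Then $g:=g_1+h_1$ meets every interpolation constraint and $\sup_{[a,b]}p(f-g)<\varepsilon$, so $f\in\overline{\mathcal{P}_{0,X}}$ (resp.\ $\overline{\mathcal{P}_{1,X}}$).

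The crux is the third step, which I isolate as the key sublemma: \emph{for prescribed $c_{ij}\in X$ and any $\delta>0$ there is $h\in\mathbb{C}[z]\otimes X$ with $h(t_i)=0$, $h^{(j)}(t_i)=c_{ij}$ $(1\le j\le m)$ and $\sup_{[a,b]}p(h)<\delta$} --- a polynomial of arbitrarily small amplitude but arbitrarily large prescribed derivatives. I would first build a \emph{smooth} such function and then polynomialize. Fix a smooth bump $\phi$ with $\phi\equiv1$ near $0$ and $\supp\phi\subset(-1,1)$, and for small $\eta>0$ set
\[
u(z)=\sum_{i=0}^n \phi\!\left(\tfrac{z-t_i}{\eta}\right)\sum_{j=1}^m \tfrac{c_{ij}}{j!}\,(z-t_i)^j,
\]
with $\eta$ small enough that the supports are disjoint and contained in $[a,b]$. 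Where $\phi=1$ the inner Taylor tail gives $u(t_i)=0$ and $u^{(j)}(t_i)=c_{ij}$, while on each support $p(u)\le C\eta$, so $\sup_{[a,b]}p(u)<\delta/2$. Next I would approximate $u$ in the $C^m$ sense by some $P\in\mathbb{C}[z]\otimes X$ (approximate $u^{(m)}$ uniformly and integrate $m$ times, matching the data of $u$ at $a$), so that $\sup_{[a,b]}p(P)$ remains below $\delta/2$ up to a controllable error, $P(t_i)\approx0$ and $P^{(j)}(t_i)\approx c_{ij}$. Finally I would absorb the small residuals by a second small-data vector Hermite interpolant from \autoref{lem:inter}, whose $p$-sup is again controlled linearly by the residual; the sum is the desired $h$.

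The genuinely delicate point, and the one I expect to be the main obstacle, is exactly this amplitude-versus-derivative trade-off inside the sublemma: the polynomial must stay uniformly below $\delta$ yet carry large prescribed derivatives at the nodes. Routing through a compactly supported smooth model with bumps shrinking onto the $t_i$ makes the small-amplitude/large-derivative phenomenon transparent, and the $C^m$ polynomialization together with the linear (hence small-on-small-data) Hermite residual correction converts it back into $\mathbb{C}[z]\otimes X$ without spoiling either the sup bound or the exact interpolation data. The assertion $\overline{\mathcal{P}_{1,X}}=C([a,b],X)$ is then the special case in which the value-correction step is vacuous and $f$ is arbitrary.
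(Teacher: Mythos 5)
Your argument is correct (modulo routine bookkeeping), but it takes a genuinely different route from the paper's. The paper first proves the special case $B_{ij}=\{0\}$ by applying its abstract Stone--Weierstrass machinery (\autoref{thm:SW}) to the algebra $A_0$ of scalar polynomials whose derivatives of order $\leq m$ vanish at the nodes, acting on the module $\mathcal{P}_{0,X}$ (the hypotheses (iii)--(iv) there being checked with \autoref{lem:inter}); for general $B_{ij}$ it then invokes \autoref{lem:closure} to reduce to targets $g$ with $g(t_i)=b_{i0}\in B_{i0}$ \emph{exactly}, subtracts the Hermite interpolant $\sum p_{ij}\otimes b_{ij}$, and approximates the difference by the vanishing module. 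You bypass both \autoref{thm:SW} and \autoref{lem:closure}: you start from an unconstrained Grothendieck--Stone--Weierstrass approximation, handle the closure $\overline{B_{i0}}$ directly by choosing $b_{i0}\in B_{i0}$ close to $f(t_i)$, and concentrate all the difficulty in your sublemma (zero values, arbitrary prescribed derivatives of orders $1,\dots,m$, arbitrarily small sup-norm), proved constructively by shrinking bumps, scalar $C^m$ Weierstrass approximation, and a small-data Hermite residual correction. The two key inputs are in fact equivalent --- your sublemma plus the unconstrained density yields the paper's vanishing-module density, and conversely --- so either proof could feed the other; the paper's is softer and shorter given its established machinery (and fits its purpose of illustrating \autoref{thm:A}), while yours is elementary, quantitative, and makes explicit why derivative constraints are invisible in the uniform topology. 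Two small repairs to your write-up: the $\varepsilon/3$ budget must be allocated \emph{after} fixing the Hermite constant $C_0=\sum_i\sup_{[a,b]}|p_{i0}|$, since $h_0$ magnifies its interpolation data by $C_0$, so the accuracies of $g_0$ and of $b_{i0}$ should be taken of size $\varepsilon/(6C_0)$ rather than $\varepsilon/3$; and the $C^m$ polynomialization of $u$ should be performed on the scalar coefficients of the finite-rank function $u=\sum_{i,j}\psi_{ij}\otimes c_{ij}$, where $\psi_{ij}(z)=\phi\bigl((z-t_i)/\eta\bigr)(z-t_i)^j/j!$, so that only scalar approximation and integration are needed --- for a general locally convex $X$ one should not integrate $X$-valued functions directly.
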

\begin{proof}
	Let $ A_0 = \{ p|_{[a, b]}: p \in \mathbb{C}[z], p^{(j)}(t_i) = 0, 0 \leq i \leq n, ~0 \leq j \leq m \} $ and $ S = \{t_i:1 \leq i \leq n \} $. First assume $ B_{ij} = \{0\} $. In this case, we have $ A_0 \mathcal{P}_{0, X} \subset \mathcal{P}_{0, X} $ and all the assumptions for $ S $, $ A = A_0 $ and $ G = \mathcal{P}_{0, X} $ in \autoref{thm:SW} are satisfied; conditions (iii) (iv) hold due to \autoref{lem:inter}. This shows that $ \overline{\mathcal{P}_{0,X}} = \mathcal{K}_{0, X} $.

	Consider the general case; without loss of generality, let $ B_{ij} \neq \emptyset $. For brevity, let $ n = 1, m = 1 $. By \autoref{lem:closure}, it suffices to consider the approximation of $ g \in C([a, b], X) $, $ g(t_i) = b_{i0} $ where $ b_{ij} \in B_{ij} $. Using \autoref{lem:inter}, we can take four polynomials $ p_{ij} $ such that $ p_{00}(t_0) = 1 $, $ p_{10}(t_1) = 1 $, $ p'_{01}(t_0) = 1 $, $ p'_{11}(t_1) = 1 $ and others of $ p^{(l)}_{ij}(t_k) $ equal $ 0 $ ($ i,j,l,k \in \{0,1\} $). Set $ f(t) = \sum p_{ij}(t)b_{ij} $. Then $ f^{(j)}(t_i) = b_{ij} \in B_{ij} $. For $ g_0 = g - f $ and every $ p \in \mathcal{A} $, by the above proof, we have polynomials $ g_{n,p} $ such that $ g_{n,p} \rightrightarrows g_0 $ and $ g^{(j)}_{n,p}(t_{i}) = 0 $. Now $ g_{n,p} + f \rightrightarrows g $ and $ g_{n,p} + f \in \mathcal{P}_{0, X} $. That is, $ \overline{\mathcal{P}_{0,X}} = \mathcal{K}_{0, X} $.

	The proof of $ \overline{\mathcal{P}_{1, X}} = C([a, b], X) $ is similar (and easier). This completes the proof.
\end{proof}

Here is a simple application of \autoref{lem:appP} which is a well known result: For every $ \varepsilon > 0 $, $ n, m \in \mathbb{N} $ and every $ f \in C^{m}([0,1], \mathbb{C}) $, $ t_i \in [0,1] $, $ i = 1,2, \ldots,n $, there is $ g \in \mathbb{C}[z] $ such that $ g^{(j)}(t_i) = f^{(j)}(t_i) $ ($ 0 \leq i \leq n $, $ 0 \leq j \leq m $) and $ |f - g| \leq \varepsilon $.

\begin{proof}[Proof of \autoref{exa:delay0}]
	By \autoref{lem:closure}, it suffices to consider the approximation of $ \varphi \in C([a, b], X) $, where $ \varphi(0) \in D(A) $. For every $ p \in \mathcal{A} $, by \autoref{lem:closure}, we know there are $ \varphi_{n,p} \in \mathbb{C}[z] \otimes X $ such that $ \varphi_{n,p} \rightrightarrows \varphi $, $ \varphi_{n,p}(0) = \varphi(0) $ and $ \varphi'_{n,p}(0) = A \varphi(0) $; particularly $ \varphi'_{n,p}(0) = A \varphi_{n,p}(0) $, i.e., $ \varphi_{n,p} \in D_{1,0} $. The proof is complete.
\end{proof}

For $ \lambda \in \mathbb{R} $, write $ \lambda - A = \lambda \cdot \id - A $.

\begin{exa}\label{exa:delay}
	Let $ A, X_0, \mathcal{C}_{A} $ be as in \autoref{exa:delay0}. Assume $ X $ is a Banach space, $ L: C([-1, 0], X) \to X $ is a bounded linear operator and $ \lim_{\lambda \to \infty}\|(\lambda - A)^{-1}\| = 0 $.
	Consider
	\begin{gather*}
	D_1 = \{ \varphi \in C^{1}([-1, 0], X): \varphi(0) \in D(A), \varphi'(0) = A\varphi(0) + L \varphi \}.
	\end{gather*}
	Then $ \overline{D_1} = \mathcal{C}_{A} $.
\end{exa}

\begin{proof}

	By \autoref{lem:closure}, it is sufficient to consider the approximation of $ \varphi \in C([a, b], X) $ such that $ \varphi(0) \in D(A) $. First, by \autoref{exa:delay0}, there are $ \varphi_{n} \in \mathbb{C}[z] \otimes X $ such that $ \varphi_{n} \rightrightarrows \varphi $ and $ \varphi'_{n}(0) = A\varphi_{n}(0) $, $ \varphi_{n}(0) \in D(A) $. Write
	\[
	\epsilon_{\lambda} (t) \triangleq e^{\lambda t},~ (\epsilon_{\lambda} \otimes x) (t) \triangleq \epsilon_{\lambda} (t) x, ~t \in [-1,0], ~\text{and}~ L_{\lambda} x \triangleq L \epsilon_{\lambda} \otimes x, ~x \in X.
	\]
	Set $ \widetilde{\varphi}_{n} = \varphi_{n} - \epsilon_{\lambda_{n}}\otimes x_n $ for some large $ \lambda_{n} > 0 $ and $ x_{n} \in X_0 $ which will be chosen later. Then we have
	\[
	\widetilde{\varphi}'_{n}(0) = \varphi'_{n}(0) - \lambda_{n} x_n = A\varphi_{n}(0) - \lambda_{n} x_n = A(\widetilde{\varphi}_{n}(0) + x_n) - \lambda_n x_n,
	\]
	and so if we let $ (A + L_{\lambda_n} - \lambda_{n}) x_n = L\varphi_{n} $, then $ \widetilde{\varphi}'_{n}(0) = A\widetilde{\varphi}_{n}(0) + L\widetilde{\varphi}_{n} $, i.e., $ \widetilde{\varphi}_{n} \in D_1 $. Note that since $ \|L_{\lambda}\| \leq \|L\| $ for all $ \lambda > 0 $ and $ \|(\lambda - A)^{-1}\| \to 0 $ as $ \lambda \to \infty $, we have $ \lambda - A - L_{\lambda} $ is invertible for large $ \lambda > 0 $ and
	\[
	(\lambda - A - L_{\lambda})^{-1} = (\id - (\lambda - A)^{-1}L_\lambda)^{-1} (\lambda - A)^{-1};
	\]
	in addition, $ \|(\lambda - A - L_{\lambda})^{-1}\| \to 0 $ as $ \lambda \to \infty $. Let $ x_n = (A + L_{\lambda_n} - \lambda_{n})^{-1} L\varphi_{n} $ where $ \lambda_{n} = n $ for large $ n \in \mathbb{N} $. As $ \varphi_{n} \rightrightarrows \varphi $, we get $ x_n \to 0 $ and so $ \sup_{t\in [-1,0]}|(\epsilon_{\lambda_{n}}\otimes x_n)(t)| \leq |x_n| \to 0 $, i.e., $ \widetilde{\varphi}_{n} \rightrightarrows \varphi $. This shows that $ \overline{D_1} = \mathcal{C}_{A} $ and completes the proof.
\end{proof}

\autoref{exa:delay} is related with the following \emph{delay} operator $ A_{d} $ when $ A $ is a \emph{Hille--Yosida operator} (see e.g. \cite[Definition 3.22]{EN00}),
\[
A_{d} \varphi = \dot{\varphi}, ~ \varphi \in D_1 \triangleq D(A_{d}).
\]
In this case, $ \lim_{\lambda \to \infty}\|(\lambda - A)^{-1}\| = 0 $ is satisfied by the definition of Hille--Yosida operator.
For more details, see e.g. \cite{EA06, EN00} and the references therein.

\section*{Acknowledgments}
The author is very indebted to the referees for their meaningful comments which improved the original manuscript.


\begin{bibdiv}
\begin{biblist}

\bib{BD81}{article}{
      author={Brosowski, Bruno},
      author={Deutsch, Frank},
       title={An elementary proof of the {S}tone-{W}eierstrass theorem},
        date={1981},
        ISSN={0002-9939},
     journal={Proc. Amer. Math. Soc.},
      volume={81},
      number={1},
       pages={89\ndash 92},
         url={https://doi.org/10.2307/2043993},
      review={\MR{589143}},
}

\bib{Bis61}{article}{
      author={Bishop, Errett},
       title={A generalization of the {S}tone-{W}eierstrass theorem},
        date={1961},
        ISSN={0030-8730},
     journal={Pacific J. Math.},
      volume={11},
       pages={777\ndash 783},
         url={http://projecteuclid.org/euclid.pjm/1103037116},
      review={\MR{0133676}},
}

\bib{Bor76}{article}{
      author={Borell, Christer},
       title={Approximation on locally convex spaces},
        date={1976},
        ISSN={0020-9910},
     journal={Invent. Math.},
      volume={34},
      number={3},
       pages={215\ndash 229},
         url={https://doi.org/10.1007/BF01403068},
      review={\MR{0413236}},
}

\bib{Buc58}{article}{
      author={Buck, R.~Creighton},
       title={Bounded continuous functions on a locally compact space},
        date={1958},
        ISSN={0026-2285},
     journal={Michigan Math. J.},
      volume={5},
       pages={95\ndash 104},
         url={http://projecteuclid.org/euclid.mmj/1028998054},
      review={\MR{0105611}},
}

\bib{Bur05}{article}{
      author={Burlando, Laura},
       title={The distance to the functions with range in a given set in
  {B}anach spaces of vector-valued continuous functions},
        date={2005},
        ISSN={0021-9045},
     journal={J. Approx. Theory},
      volume={137},
      number={1},
       pages={22\ndash 41},
         url={https://doi.org/10.1016/j.jat.2005.07.003},
      review={\MR{2179621}},
}

\bib{EA06}{incollection}{
      author={Ezzinbi, K.},
      author={Adimy, M.},
       title={The basic theory of abstract semilinear functional differential
  equations with nondense domain},
        date={2006},
   booktitle={Delay differential equations and applications},
      series={NATO Sci. Ser. II Math. Phys. Chem.},
      volume={205},
   publisher={Springer, Dordrecht},
       pages={347\ndash 407},
         url={https://doi.org/10.1007/1-4020-3647-7_9},
      review={\MR{2337821}},
}

\bib{EN00}{book}{
      author={Engel, Klaus-Jochen},
      author={Nagel, Rainer},
       title={One-parameter semigroups for linear evolution equations},
      series={Graduate Texts in Mathematics},
   publisher={Springer-Verlag, New York},
        date={2000},
      volume={194},
        ISBN={0-387-98463-1},
        note={With contributions by S. Brendle, M. Campiti, T. Hahn, G.
  Metafune, G. Nickel, D. Pallara, C. Perazzoli, A. Rhandi, S. Romanelli and R.
  Schnaubelt},
      review={\MR{1721989}},
}

\bib{FdLP84}{article}{
      author={Feyel, D.},
      author={de~La~Pradelle, A.},
       title={Sur certaines extensions du th\'{e}or\`eme d'approximation de
  {B}ernstein},
        date={1984},
        ISSN={0030-8730},
     journal={Pacific J. Math.},
      volume={115},
      number={1},
       pages={81\ndash 89},
         url={http://projecteuclid.org/euclid.pjm/1102708413},
      review={\MR{762203}},
}

\bib{GJ60}{book}{
      author={Gillman, Leonard},
      author={Jerison, Meyer},
       title={Rings of continuous functions},
      series={The University Series in Higher Mathematics},
   publisher={D. Van Nostrand Co., Inc., Princeton, N.J.-Toronto-London-New
  York},
        date={1960},
      review={\MR{0116199}},
}

\bib{Gli63}{article}{
      author={Glicksberg, I.},
       title={Bishop's generalized {S}tone-{W}eierstrass theorem for the strict
  topology},
        date={1963},
        ISSN={0002-9939},
     journal={Proc. Amer. Math. Soc.},
      volume={14},
       pages={329\ndash 333},
         url={https://doi.org/10.2307/2034636},
      review={\MR{0146645}},
}

\bib{Jew63}{article}{
      author={Jewett, R.~I.},
       title={A variation on the {S}tone-{W}eierstrass theorem},
        date={1963},
        ISSN={0002-9939},
     journal={Proc. Amer. Math. Soc.},
      volume={14},
       pages={690\ndash 693},
         url={https://doi.org/10.2307/2034972},
      review={\MR{0152882}},
}

\bib{Kas14}{article}{
      author={Kashimoto, M.~S.},
       title={A note on a {S}tone-{W}eierstrass type theorem for set-valued
  mappings},
        date={2014},
        ISSN={0021-9045},
     journal={J. Approx. Theory},
      volume={182},
       pages={59\ndash 67},
         url={https://doi.org/10.1016/j.jat.2014.03.008},
      review={\MR{3195378}},
}

\bib{Lub07}{article}{
      author={Lubinsky, D.~S.},
       title={A survey of weighted polynomial approximation with exponential
  weights},
        date={2007},
        ISSN={1555-578X},
     journal={Surv. Approx. Theory},
      volume={3},
       pages={1\ndash 105},
      review={\MR{2276420}},
}

\bib{Mac77}{article}{
      author={Machado, Silvio},
       title={On {B}ishop's generalization of the {W}eierstrass-{S}tone
  theorem},
        date={1977},
     journal={Indag. Math.},
      volume={39},
      number={3},
       pages={218\ndash 224},
      review={\MR{0448046}},
}

\bib{Nac65}{article}{
      author={Nachbin, Leopoldo},
       title={Weighted approximation for algebras and modules of continuous
  functions: {R}eal and self-adjoint complex cases},
        date={1965},
        ISSN={0003-486X},
     journal={Ann. of Math. (2)},
      volume={81},
       pages={289\ndash 302},
         url={https://doi.org/10.2307/1970617},
      review={\MR{0176353}},
}

\bib{PB17}{article}{
      author={Paltineanu, Gavriil},
      author={Bucur, Ileana},
       title={Some density theorems in the set of continuous functions with
  values in the unit interval},
        date={2017},
        ISSN={1660-5446},
     journal={Mediterr. J. Math.},
      volume={14},
      number={2},
       pages={Art. 44, 12},
         url={https://doi.org/10.1007/s00009-017-0870-5},
      review={\MR{3619405}},
}

\bib{PK02}{article}{
      author={Prolla, J.~B.},
      author={Kashimoto, M.~S.},
       title={Simultaneous approximation and interpolation in weighted spaces},
        date={2002},
        ISSN={0009-725X},
     journal={Rend. Circ. Mat. Palermo (2)},
      volume={51},
      number={3},
       pages={485\ndash 494},
         url={https://doi.org/10.1007/BF02871856},
      review={\MR{1947469}},
}

\bib{PM73}{article}{
      author={Prolla, Jo\~{a}o~Bosco},
      author={Machado, Silvio},
       title={Weighted {G}rothendieck subspaces},
        date={1973},
        ISSN={0002-9947},
     journal={Trans. Amer. Math. Soc.},
      volume={186},
       pages={247\ndash 258 (1974)},
         url={https://doi.org/10.2307/1996564},
      review={\MR{0402477}},
}

\bib{PM82}{article}{
      author={Prolla, Jo\~{a}o},
      author={Machado, S.},
       title={Weierstrass-{S}tone theorems for set-valued mappings},
        date={1982},
        ISSN={0021-9045},
     journal={J. Approx. Theory},
      volume={36},
      number={1},
       pages={1\ndash 15},
         url={https://doi.org/10.1016/0021-9045(82)90066-1},
      review={\MR{673852}},
}

\bib{Pro71}{article}{
      author={Prolla, Jo\~{a}o~B.},
       title={Bishop's generalized {S}tone-{W}eierstrass theorem for weighted
  spaces},
        date={1971},
        ISSN={0025-5831},
     journal={Math. Ann.},
      volume={191},
       pages={283\ndash 289},
         url={https://doi.org/10.1007/BF01350331},
      review={\MR{0290015}},
}

\bib{Pro77}{book}{
      author={Prolla, Jo\~{a}o~Bosco},
       title={Approximation of vector valued functions},
   publisher={North-Holland Publishing Co., Amsterdam-New York-Oxford},
        date={1977},
        ISBN={0-444-85030-9},
        note={North-Holland Mathematics Studies, Vol. 25, Notas de
  Matem\'{a}tica, No. 61. [Notes on Mathematics, No. 61]},
      review={\MR{0500122}},
}

\bib{Pro88}{article}{
      author={Prolla, Jo\~{a}o~B.},
       title={A generalized {B}ernstein approximation theorem},
        date={1988},
        ISSN={0305-0041},
     journal={Math. Proc. Cambridge Philos. Soc.},
      volume={104},
      number={2},
       pages={317\ndash 330},
         url={https://doi.org/10.1017/S030500410006549X},
      review={\MR{948917}},
}

\bib{Pro94}{article}{
      author={Prolla, Jo\~{a}o~B.},
       title={On the {W}eierstrass-{S}tone theorem},
        date={1994},
        ISSN={0021-9045},
     journal={J. Approx. Theory},
      volume={78},
      number={3},
       pages={299\ndash 313},
         url={https://doi.org/10.1006/jath.1994.1080},
      review={\MR{1292963}},
}

\bib{Ran84}{article}{
      author={Ransford, T.~J.},
       title={A short elementary proof of the {B}ishop-{S}tone-{W}eierstrass
  theorem},
        date={1984},
        ISSN={0305-0041},
     journal={Math. Proc. Cambridge Philos. Soc.},
      volume={96},
      number={2},
       pages={309\ndash 311},
         url={https://doi.org/10.1017/S0305004100062204},
      review={\MR{757664}},
}

\bib{Rud87}{book}{
      author={Rudin, Walter},
       title={Real and complex analysis},
     edition={Third},
   publisher={McGraw-Hill Book Co., New York},
        date={1987},
        ISBN={0-07-054234-1},
      review={\MR{924157}},
}

\bib{Rud91}{book}{
      author={Rudin, Walter},
       title={Functional analysis},
     edition={Second},
      series={International Series in Pure and Applied Mathematics},
   publisher={McGraw-Hill, Inc., New York},
        date={1991},
        ISBN={0-07-054236-8},
      review={\MR{1157815}},
}

\bib{Sum71}{article}{
      author={Summers, W.~H.},
       title={The general complex bounded case of the strict weighted
  approximation problem},
        date={1971},
        ISSN={0025-5831},
     journal={Math. Ann.},
      volume={192},
       pages={90\ndash 98},
         url={https://doi.org/10.1007/BF02052753},
      review={\MR{0284800}},
}

\bib{Tim05}{article}{
      author={Timofte, Vlad},
       title={Stone-{W}eierstrass theorems revisited},
        date={2005},
        ISSN={0021-9045},
     journal={J. Approx. Theory},
      volume={136},
      number={1},
       pages={45\ndash 59},
         url={https://doi.org/10.1016/j.jat.2005.05.004},
      review={\MR{2165118}},
}

\end{biblist}
\end{bibdiv}

\end{document}